\theoremstyle{plain}
\newtheorem{teo}{Theorem}[section]
\newtheorem*{theorem*}{Theorem}
\newtheorem{lem}[teo]{Lemma}
\newtheorem{coro}[teo]{Corollary}
\newtheorem{prop}[teo]{Proposition}
\theoremstyle{remark}
\newtheorem{remark}[teo]{Remark}
\numberwithin{equation}{section}
\newcommand{\quadro}{\hfill\vrule height .9ex width .8ex depth -.1ex}
\newcommand{\NN}{\Bbb{N}}
\newcommand{\RR}{\Bbb{R}}
\newcommand{\CC}{\Bbb{C}}
\newcommand{\DR}{Damek--Ricci }
\newcommand{\n}{\mathfrak{n}}
\newcommand{\ag}{\mathfrak{a}}
\newcommand{\vg}{\mathfrak{v}}
\newcommand{\zg}{\mathfrak{z}}
\newcommand{\s}{\mathfrak{s}}
\newcommand{\LB}{\Delta} 
\newcommand{\D}{L} 
\newcommand{\LQ}{\Delta_Q} 
\newcommand{\kD}{k_{\psi(L)}}
\newcommand{\kQ}{k_{\psi(\Delta_Q)}}
\newcommand{\du}{\delta^{1/2}} 
\newcommand{\di}{\,{\rm{d}}}
\newcommand{\dir}{\,{\rm{d}}\rho}  
\newcommand{\dil}{\,{\rm{d}}\lambda}  
\newcommand{\nep}{{\rm{e}}} 
\newcommand{\supp}{{\rm{supp }\,}}
\begin{document}
\title[Wave equation and multiplier estimates on Damek--Ricci spaces]{\bf{Wave equation and multiplier estimates\\ on Damek--Ricci spaces}}

\subjclass[2000]{43A15, 42B15, 22E30}

\keywords{Damek--Ricci spaces, wave equation, spectral multipliers}

\thanks{{\bf Acknowledgement.}  Work partially supported by the European Commission via the Network HARP, ``Harmonic analysis and related problems''. }

\author[D. M\"uller, M. Vallarino]
{Detlef M\"uller and Maria Vallarino}

\address{Detlef M\"uller:
Matematisches Seminar
\\ Christian-Albrechts Universit\"at  \\
Ludewig-Meyn-Strasse 4
\\ D-24098 Kiel\\ Germany -- mueller@math.uni-kiel.de}
\address{Maria Vallarino:
Laboratoire MAPMO\\
 Universit\'e d'Orl\'eans, UFR Sciences\\
B\^atiment de Math\'ematique-Route de Char\-tres\\
B.P. 6759\\ 45067 Orl\'eans cedex 2,   France --  maria.vallarino@unimib.it}

\begin{abstract}
Let $S$ be a \DR space and $\D$ be a distinguished left invariant Laplacian on $S$. We prove pointwise estimates for the convolution kernels of spectrally localized wave operators of the form
$$\nep^{it\sqrt{\D}}\,\psi\big(\sqrt{\D}/{\lambda}\big)$$
for arbitrary time $t$ and arbitrary $\lambda >0$, where $\psi$ is a smooth bump function supported in $[-2,2]$ if $\lambda <1$ and supported in $[1,2]$ if $\lambda \geq 1$. This generalizes previous results in \cite{MT}. We also prove pointwise estimates for the gradient of these convolution kernels. As  a corollary, we reprove basic multiplier estimates from \cite{HS} and \cite{V1} and derive Sobolev estimates for the solutions to the wave equation associated to $\D$.
\end{abstract}

\maketitle

\section{Introduction}
Let $\frak{n}=\frak{v}\oplus\frak{z}$ be an $H$-type algebra and let $N$ be the connected and simply connected Lie group associated to $\frak{n}$ (see Section 
\ref{DRsec} for the details). By  $S$ we denote the one-dimensional solvable extension  of $N$ obtained by making $A=\Bbb{R}^+$ act 
on $N$ by homogeneous dilations $\delta_a$. We choose  $H$ in the Lie algebra  $\frak{a}$ of $A$ so that $\exp(tH)=e^t, t\in \RR,$ and extend the inner product on the Lie algebra 
 $\frak{n}$ of $N$ to the Lie algebra $\frak{s}=\frak{n}\oplus\frak{a}$ of $S$, by requiring $\frak{n}$
and $\frak{a}$ to be orthogonal and $H$ to be a unit vector. Let $d$ be the \emph{left invariant} Riemannian metric on $S$ 
which agrees with the inner product on $\frak{s}$ at the identity. 

By  $\lambda$ and $\rho$ we  denote the left and right invariant Haar measures on $S$, respectively. It is well known that both the 
right and the left Haar measures of geodesic balls are exponentially growing functions of the radius,
so that $S$ is a group of \emph{exponential growth}. 

The space $S$ is called a \emph{Damek--Ricci space}; these spaces were 
introduced by E.~Damek and F.~Ricci \cite{D1, D2, DR1, DR2},
and include all rank one symmetric spaces of the noncompact type.
Most of them are nonsymmetric harmonic manifolds,  
and provide counterexamples to the Lichnerowicz conjecture. The geometry of these extensions was studied by M.~Cowling, 
A.~H.~Dooley, A.~Kor\'anyi and Ricci in \cite{CDKR1, CDKR2}.

The simplest example of Damek--Ricci spaces is given by the so called $ax+b$-groups, which can be thought as the harmonic extensions of $N=\mathbb R^d$. 

Let $\{X_0,\ldots,X_{n-1}\}$ be an orthonormal basis of the Lie algebra $\frak{s}$ such that 
$X_0=H$, the elements $X_1,\ldots,X_{m_{\frak{v}}}$ form an orthonormal basis of $\frak{v}$ 
and $X_{m_{\frak{v}}+1},\ldots,X_{n-1}$ form an orthonormal basis of $\frak{z}$. As usually, we shall identify an element $X\in\s$ with the corresponding left invariant differential operator on $S$  given by the Lie derivative $Xf(g)= \frac d{dt}f(g\exp tX)\Big|_{t=0}.$ 
Viewing $X_0,X_1,\ldots,X_{n-1}$ in this way as  left invariant vector fields, we  define the left invariant  \emph{Laplacian} $L$ by  
$$
\D= -\sum_{i=0}^{n-1} X_i^2\,.
$$
Then $L$  is essentially selfadjoint on $C^{\infty}_c(S)\subset L^2(\rho)$ 
and its $L^2(\rho)$--spectrum is  given by $[0,\infty)$. 

For any  Borel measurable bounded multiplier $m$ on $[0,\infty),$ we can thus define  the operator
$m(\D)
$
on $L^2(\rho)$ by means of spectral calculus.
Then  $m(\D)$ is  left invariant  too, so that, as a consequence of the Schwartz kernel theorem, there exists a unique distribution $k$ on $S$ such that
$$
m(\D)f=f\ast k\qquad \forall f\in \mathcal D(S)\,.
$$

Several  authors have investigated the $L^p$--functional calculus for the Laplacian $L$, i.e.,  they studied sufficient and necessary conditions on a multiplier $m$ such that the operator $m(\D)$ extends from  $L^2(\rho)\cap L^p(\rho)$ to an  $L^p(\rho)$- bounded operator, for  a given  $p$ in $(1,\infty)$ \cite{H, CGHM, A2, HS, V1} (in some of these papers, the authors work with  a right invariant Laplacian $\D_r,$  sometimes on the class of solvable groups arising  in  the
Iwasawa decomposition of noncompact connected semisimple Lie groups 
of finite centre, but one can easily pass to our left-invariant Laplacian $L$ by means of group inversion).
\smallskip

The main purpose of this article is to prove pointwise estimates for 
the convolution kernels of spectrally localized multiplier operators 
of the form
$$
e^{it\sqrt \D}\,\psi(\sqrt \D/\lambda),
$$
for arbitrary time $t$ and $\lambda>0$, where $\psi$ is a bump function 
supported in $[-2,2]$, if $\lambda\leq 1$, and in $[1,2]$, if $\lambda >1$. 
Such estimates were proved in \cite{MT} in the case of $ax+b$-groups. 
In this paper we generalize the results in \cite{MT} to Damek--Ricci spaces, 
and we also find pointwise estimates of the gradient of these kernels. 

We shall use these estimates to give a new proof of basic multiplier results 
in \cite{HS, V1}, which is based entirely on the wave equation.

Moreover, as a corollary we obtain $L^p$--estimates for Fourier multiplier 
operators of the form $m(\sqrt{\D})\,\cos(t\,\sqrt{\D})$ and 
$m(\sqrt{\D})\,\frac{\sin(t\,\sqrt{\D})}{\sqrt{\D}}$, 
where $m$ is a suitable symbol, and we 
derive Sobolev estimates for  solutions of the wave equation associated 
with $\D$ on Damek--Ricci spaces. 

This problem was first studied in the euclidean setting in \cite{M, P}. 
The problem of the regularity in space for fixed time of the wave equation 
associated with the Laplace--Beltrami operator on a 
noncompact symmetric space of arbitrary rank was studied in \cite{GM, CGM}. 
In the case of noncompact symmetric spaces of rank one A.~Ionescu \cite{I}
estimated the $L^p$--norm of the Fourier integral operators 
for variable time and derived Sobolev estimates for the solution of the 
wave equation associated with a shifted Laplace--Beltrami operator. Since this is the counterpart of our result in the same setting for a different Laplacian 
we shall discuss it in more details at the end of our paper.

Our paper is organized as follows. In Section 2 we recall the definition of $H$-type groups and Damek--Ricci spaces and we summarize some results about 
spherical analysis on such spaces. In Section 3 we recall some properties 
of the Laplacian $L$ and derive a formula for convolution kernels of multipliers of $L$. In Section 4 and 5 we prove pointwise estimates for the spectrally localized wave propagator and for its gradient, respectively. We deduce an $L^1$-estimate for these kernels; then we apply it to reprove a basic multiplier estimate from \cite{HS} and \cite{V1}. In Section 6 we derive Sobolev estimates for the solutions to the wave equation associated to $\D$.

We shall apply the ''variable constant'' convention in this paper, according to which  $C$ will usually denote a positive, finite constant which may vary from line to line and may depend on parameters according to the context. Given two quantities $f$ and $g$, by $f\lesssim g$ we mean that there exists a constant $C$ such that $f\leq C\,g$ and by $f \asymp g$ we mean that there exist constants $C_1,\,C_2$ such that $C_1\,g\leq f\leq C_2\,g$.

\section{Damek--Ricci spaces}\label{DRsec}
In this section we recall the definition of $H$-type groups, which had been introduced by Kaplan \cite{K}, describe their harmonic extensions and recall the main results of spherical analysis on these extensions. For the details see \cite{ADY, CDKR1, CDKR2}.
\smallskip

Let $\n$ be a two-step nilpotent Lie algebra equipped  with an inner product $\langle\cdot,\cdot\rangle$ and denote by $|\cdot |$ the corre\-spon\-ding norm. Let $\vg$ and $\zg$ be complementary orthogonal subspaces of $\n$ such that $[\n,\zg ]=\{0\}$ and $[\n,\n]\subseteq \zg$.  


The algebra $\n$ is of {\it $H$-type} if for every  $z$ in $\zg$ the map $J_z:\vg\to \vg$ defined by
$$\langle J_zv,v'\rangle\,=\,\langle z,[v,v']\rangle\qquad\forall v, v'\in \vg$$
satisfies the condition
$$
|J_zX|=|Z|\,|X|\qquad \forall X\in \vg\quad \forall Z\in\zg.
$$ 
The connected and simply connected Lie group $N$ associated to $\n$ is called an {\it $H$-type group.} We identify $N$ with its Lie algebra $\n$ via the exponential map
\begin{align*}
\vg\times\zg &\to N\\
(v,z)&\mapsto \exp(v+z)\,.
\end{align*}
The product law in $N$ is given by
$$(v,z)(v',z')=\big(v+v',z+z'+({1}/{2})\,[v,v']\big)\qquad\forall v,\,v'\in \vg\quad\forall z,\,z'\in\zg\,.$$
The group $N$ is  two-step nilpotent, hence unimodular, with Haar measure $\di v \di z$.
We define the following  dilations on $N$:
\begin{align*}
\delta_a(v,z)&=(a^{1/2}v,az)\qquad\forall (v,z)\in N \quad\forall a\in\RR^+\,.
\end{align*}
These are automorphisms, so that $N$ is an homogeneous group. A homogeneous  norm is given by
$$\mathcal N (v,z)=\left(\frac{|v|^4}{16}+|z|^2\right)^{1/4}\qquad\forall (v,z)\in N\,.$$
Note that $\mathcal N\big(\delta_a(v,z)\big)=a^{1/2}\mathcal N(v,z),$ so that $\mathcal N$ is homogeneous of degree one with respect to the modified dilation group $\{\delta_{r^2}\}_{r>0}.$  Set $Q={(m_{\vg}+2m_{\zg})}/{2}\,$, where $m_{\vg}$ and $m_{\zg}$ denote the dimensions of $\vg$ and $\zg$, respectively. 

Let $H$ be the element of $\ag$ such that ${\rm{ad}}(H)v=\frac{1}{2}v$, if $v\in \vg$, and ${\rm{ad}}(H)z=z$, if $z\in \zg$. We extend the inner
product on $\n$ to the algebra $\s=\n\oplus\ag$, by requiring $\n$
and $\ag$ to be orthogonal and $H$ to be unitary. The algebra $\s$
is a solvable Lie algebra. The corresponding Lie group $S$ is the semi-direct extension $S=N\rtimes A$, where $A=\mathbb{R}^+$ acts on $N$ by the above dilations. 

The map
\begin{align*}
\vg\times\zg\times\RR^+ &\to S\\
(v,z,a)&\mapsto \exp(v+z)\exp(\log a \,H)
\end{align*}
gives global coordinates on $S$. The product in $S$ is then given by the rule
$$(v,z,a)(v',z',a')=\big(v+a^{1/2}v',z+a\,z'+({1}/{2})\,a^{1/2}[v,v'],a\,a'\big )$$
for all $(v,z,a),\,(v',z',a')$ in $S$. Let $e$ be the identity of the group $S$. We shall denote by $n=m_{\vg}+m_{\zg}+1$ the dimension of $S$. The
group $S$ is nonunimodular: the right and left Haar measures on
$S$ are  given by $\dir(v,z,a)=a^{-1}\di v\di z\di a\,$ and
$\dil(v,z,a)=a^{-(Q+1)}\di v\di z\di a\,,$ respectively. In particular,
$$
\dil=\delta\,\dir,
$$
where the modular function is given by 
 $\delta(v,z,a)=a^{-Q}$. 

We endow $S$ with the left invariant Riemannian metric which agrees with the inner product on $\s$ at the identity. Let $d$ denote the distance induced by this Riemannian structure. The Riemannian manifold $(S,d)$ is then called a \emph{\DR space} or {\it harmonic $NA$ group.} 

 It is well known \cite[formula (2.18)]{ADY} that 
\begin{align}\label{distanza}
\cosh ^2\left(\frac{d\big((v,z,a),e\big)}{2}\right)=\left(\frac{a^{1/2}+a^{-1/2}}{2}+\frac{1}{8}\,a^{-1/2}|v|^2\right)^2+\frac{1}{4}\,a^{-1}|z|^2\qquad \forall (v,z,a)\in S\,.
\end{align}
\medskip
We shall later use the notation
\begin{equation}\label{R}
R(x)=d(x,e), \quad x\in S.
\end{equation}

Let us remark at this point that it is  natural to include also the ''degenerate'' cases where $N$ is abelian into the definition of  H-type  group respectively of Damek-Ricci space, since then all symmetric spaces of noncompact type and rank one, including real hyperbolic spaces,  are special cases of these spaces, when considered as Riemannian manifolds (cf. \cite{CDKR2}). Indeed, all of our previous definitions and subsequent arguments will apply as well when $N$ is abelian. Only when $N$ is one-dimensional, some estimates will have to be modified. However, since the case where $N$ is abelian had essentially already been dealt with in \cite{MT}, we shall restrict ourselves in this paper to the case where $N$ is non-abelian. 

 \medskip
We denote by $B\big((v_0,z_0,a_0),r\big)$ the ball in $S$ centred at $(v_0,z_0,a_0)$ of radius $r$. In particular let $B_r$ denote the ball of center $e$ and radius $r$; note that \cite[formula (1.18)]{ADY} 
\begin{align}\nonumber
\rho\big(B_r\big)\asymp \begin{cases} 
r^n&{\rm{if~}}r<1\\
\nep^{Qr}&{\rm{if~}}r\geq 1\,.
\end{cases}
\end{align}
This shows in particular that $S$, equipped with the right Haar measure $\rho$, is a group of exponential growth.

A {\it radial} function on $S$ is a function that depends only on the distance from the identity. If $f$ is radial, then \cite[formula (1.16)]{ADY}
\begin{equation}\label{intsin}
\int_Sf\dil=\int_0^{\infty}f(r)\,A(r)\di r\,,
\end{equation}
where 
\begin{align*}
A(r)&=2^{m_{\vg}+2m_{\zg}}\sinh ^{m_{\vg}+m_{\zg}}\left(\frac{r}{2}\right)\cosh ^{m_{\zg}}\left(\frac{r}{2}\right)\qquad\forall r\in\RR^+\,.
\end{align*}
One easily checks that
\begin{align}\label{pesoA}
A(r)&\lesssim \left(\frac{r}{1+r}\right)^{n-1}\nep^{Qr}\qquad\forall ~r\in\RR^+\,.
\end{align}
A radial function $\phi$ is {\it spherical} if it is an eigenfunction of the Laplace-Beltrami operator $\Delta$ (associated to $d$) and $\phi(e)=1$. Let $\phi_s$, for $s\in\CC$, be the spherical function with eigenvalue $s ^2+Q^2/4$, as in \cite[formula (2.6)]{ADY}.

In \cite[Lemma 1]{A1} it is shown that
\begin{equation}\label{stimafi0}
\phi_0(r)\lesssim (1+r)\,\nep^{-Qr/2}\qquad\forall~ r\in\RR^+\,.
\end{equation}
We shall use the following integration formula on $S$, whose proof is reminiscent of \cite[Lemma 1.3]{CGHM} and \cite[Lemma 3]{A1}:
\begin{lem}\label{intduf}
For every radial function $f$ in $C_c^{\infty}(S)$
\begin{align*}
\int_S\delta^{1/2}f \,{\rm{d}} \rho &=\int_0^{\infty}\phi_0(r)\,f(r)\,A(r)\,{\rm{d}} r \\
&=\int_0^{\infty}f(r)\,J(r)\,{\rm{d}} r\,,
\end{align*}
where 
$$J(r)\lesssim  \begin{cases}
r^{n-1}&{\rm{if~}}r<1\\
r\,\nep^{Qr/2}&{\rm{if~}}r\geq 1\,.
\end{cases}$$
\end{lem}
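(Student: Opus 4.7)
The plan is to split the proof into two parts: establishing the two displayed equalities via a spherical-function argument, and then bounding $J(r)$ through a direct combination of the pointwise estimates \eqref{stimafi0} and \eqref{pesoA}. For the identities, I would first convert to the left-invariant measure using $\dil=\delta\,\dir$, which gives $\int_S \delta^{1/2}\,f\,\dir = \int_S \delta^{-1/2}\,f\,\dil$. Integrating in geodesic polar coordinates around $e$, with $\sigma_r$ the induced surface measure on $S_r=\{x\in S:R(x)=r\}$ normalized so that $\sigma_r(S_r)=A(r)$ (compatibly with \eqref{intsin}), and using that $f$ is radial, this yields
\[
\int_S\delta^{-1/2}\,f\,\dil=\int_0^\infty f(r)\,\Bigl(\int_{S_r}\delta^{-1/2}\,\di\sigma_r\Bigr)\,\di r.
\]
The crucial step, in the spirit of \cite[Lemma 1.3]{CGHM} and \cite[Lemma 3]{A1}, is the identification
\[
\frac{1}{A(r)}\int_{S_r}\delta^{-1/2}\,\di\sigma_r=\phi_0(r),
\]
which I would justify by observing that $\delta^{-1/2}$ is a (non-radial) eigenfunction of the Laplace--Beltrami operator with eigenvalue $Q^2/4$, so that its radialization is a radial such eigenfunction, smooth at $e$ with value $1$ there; $\phi_0$ is uniquely characterized by these properties. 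Substituting simultaneously gives the first equality and identifies $J(r)=\phi_0(r)A(r)$, which is the second equality.

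For the bound on $J$, combining \eqref{stimafi0} and \eqref{pesoA} I obtain
\[
J(r)=\phi_0(r)\,A(r)\lesssim (1+r)\,\nep^{-Qr/2}\,\Bigl(\frac{r}{1+r}\Bigr)^{n-1}\nep^{Qr}=\frac{r^{n-1}}{(1+r)^{n-2}}\,\nep^{Qr/2}.
\]
For $r<1$ the denominator is of order one and $\nep^{Qr/2}\lesssim 1$, yielding $J(r)\lesssim r^{n-1}$. For $r\geq 1$ one has $(1+r)^{n-2}\asymp r^{n-2}$, which gives $J(r)\lesssim r\,\nep^{Qr/2}$, as claimed.

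The only delicate step is the identification of the spherical average of $\delta^{-1/2}$ with $\phi_0$; this is not a formal consequence of the eigenfunction equation alone but rests on the concrete integral representation of spherical functions on Damek--Ricci spaces developed in \cite{CGHM, A1}. Once that identification is in place, everything else reduces to routine arithmetic with the known pointwise bounds on $\phi_0$ and $A$.
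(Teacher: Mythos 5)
Your proposal is correct. The paper does not actually supply a proof of this lemma (it only points to \cite[Lemma 1.3]{CGHM} and \cite[Lemma 3]{A1}), and your argument is exactly the one those references use: after passing from $\dir$ to $\dil$, the radialization of the eigenfunction $\delta^{-1/2}$ (eigenvalue $Q^2/4$ of $\LB$, since $\LB a^s=s(Q-s)a^s$ on functions of $a$ alone) is identified with $\phi_0$ via the harmonicity of $S$, and the bound on $J=\phi_0 A$ follows from \eqref{stimafi0} and \eqref{pesoA} exactly as you compute.
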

The {\it spherical Fourier transform} of an integrable radial function $f$ on $S$ is defined by the formula
$$\mathcal H f(s)=\int_S \phi _{s}\,f\dil \,.$$
For ``nice'' radial functions $f$ on $S$ an inversion formula and a Plancherel formula hold:
$$f(x)=c_S\int_{0}^{\infty}\mathcal H f (s)\,\phi _{s}(x)\,| {\bf{c}} (s) |^{-2} \di s\qquad \forall x\in S\,,$$
and
$$\int_S|f|^2\dil =c_S\int_0^{\infty}|\mathcal H f (s)|^2\,|{\bf{c}}(s)|^{-2}\di s\,,$$
where the constant $c_S$ depends only on $m_{\vg}$ and $m_{\zg}$ and ${\bf{c}}$ denotes the Harish-Chandra fun\-ction. 

Let $\mathcal A$ denote the Abel transform and let $\mathcal F$ denote the Fourier transform on the real line, defined by $\mathcal F g(s)=\int_{-\infty}^{+\infty}g(r)\,\nep^{-isr}\di r\,,$ for each integrable function $g$ on $\RR$. It is well known that $\mathcal H=\mathcal F\circ \mathcal A$, hence $\mathcal H^{-1}=\mathcal A^{-1}\circ \mathcal F^{-1}$. We shall use the inversion formula for the Abel transform \cite[formula (2.24)]{ADY}, which we now recall. Let $\mathcal D_1$ and $\mathcal D_2$ be the differential operators on the real line defined by
\begin{align*}
\mathcal D_{1}=\,-\frac{1}{\sinh r}\,\frac{\partial}{\partial r}\,,\qquad \mathcal D_{2}=\,-\frac{1}{\sinh(r/2)}\,\frac{\partial}{\partial r} \,.
\end{align*} 
If $m_{\zg}$ is even, then
\begin{equation}\label{inv1}
\mathcal A^{-1}f(r)=\tilde a_S^e\,\mathcal D_1^{m_{\zg}/2}\mathcal D_2^{m_{\vg}/2}f (r)\,,
\end{equation}
where $\tilde a_S^e=2^{-(2m_{\vg}+m_{\zg})/2}\pi^{-(m_{\vg}+m_{\zg})/2}$, while if $m_{\zg}$ is odd, then
\begin{align}\label{inv2}
\mathcal A^{-1}f(r)=\tilde a_S^o\int_r^{\infty}\mathcal D_1^{(m_{\zg}+1)/2}\mathcal D_2^{m_{\vg}/2}f (s)(\cosh s-\cosh r)^{-1/2}\sinh s \di s\,,
\end{align}
where $\tilde a_S^o= 2^{-(2m_{\vg}+m_{\zg})/2}\pi^{-n/2}.$ 

\section{The Laplacian $\D$}\label{Laplacian}

Let $\{X_0,\ldots,X_{n-1}\}$ be an orthonormal basis of the Lie algebra $\frak{s}$ such that 
$X_0=H$, the elements $X_1,\ldots,X_{m_{\frak{v}}}$ form an orthonormal basis of $\frak{v}$ 
and $X_{m_{\frak{v}}+1},\ldots,X_{n-1}$ form an orthonormal basis of $\frak{z}$. As before, we shall view the $X_i$ as left invariant vector fields on $S.$ In particular, if $i=0$, then
\begin{align*}
X_0f(v,z,a)&=a\,\partial _a f(v,z,a)\qquad \forall f\in C^{\infty}(S)\,,
\end{align*}
while  for  $i\neq 0$ the vector fields $X_i$ do not involve a derivative in the variable $a.$

These vector fields $X_i$, $i=0,...,n-1,$ form an orthonormal basis 
of the tangent space at every point of $S$, since the Riemannian metric 
is left invariant. If we denote by 
$\nabla$ the Riemannian gradient on $S$, it is easy to verify that for any 
function $f$ in $C^{\infty}(S)$
$$
\nabla f =\sum_i (X_if) \,X_i.
$$
This implies that the Riemannian norm of the gradient, which we denote by 
$\|\nabla f\|,$ is given by 
$$
\|\nabla f\|=\big( \sum_i |X_if|^2 \big)^{1/2}.
$$
Let $
L= -\sum_{i=0}^{n-1} X_i^2
$ be the left invariant  Laplacian defined in the Introduction.

The operator $\D$  has a special relationship with the (positive definite) {\it Laplace-Beltrami operator}  $\LB=-{\rm{div}}\circ{\rm{grad}}$. Indeed, let $\LQ$ denote the shifted operator $\LB-{Q^2}/{4}$; it is known that \cite[Proposition 2]{A1}
\begin{align}\label{relationship}
\delta^{-1/2}\,L\,\delta^{1/2}f=\LQ f\,,
\end{align}
for smooth  radial functions $f$ on $S$.\\
The spectra of $\LQ$ on $L^2(\lambda)$ and $L$ on $L^2(\rho)$ are  both $[0,+\infty)$. Let $E_{\LB _Q}$ and $E_{L}$ be the spectral resolution of the identity for which 
$$\LB _Q=\int_0^{+\infty}t\, \di E_{\LB _Q}(t)\qquad{\rm and}\qquad L=\int_0^{+\infty}t\, \di E_{L}(t)\,.$$
For each bounded Borel measurable function $\psi$ on $\RR^+$ the operators $\psi(\LB _Q)$ and $\psi(L)$, spectrally defined by  
$$\psi(\LB _Q)=\int_0^{+\infty}\psi(t) \di E_{\LB _Q}(t)\qquad{\rm and}\qquad \psi(L)=\int_0^{+\infty}\psi(t) \di E_{L}(t)\,,$$
are bounded on $L^2(\lambda)$ and $L^2(\rho)$ respectively. By (\ref{relationship}) and the spectral theorem, we see that 
$$\delta^{-1/2}\psi(L)\,\delta^{1/2}f=\psi(\LQ)f\,,$$ 
for smooth compactly supported radial functions $f$ on $S$.\\
Let $\kD$ and $\kQ$ denote the convolution kernels of $\psi(\D)$ and $\psi(\LQ)$ respectively; we have that 
$$\psi (\LQ) f=f\ast\kQ\qquad{\rm {and}}\qquad \psi(\D) f=f\ast\kD\qquad\forall f\in C^{\infty}_c(S)\,,$$
where $\ast$ denotes the convolution on $S$ defined by
\begin{align*}
f\ast g(x)&=\int_S f(y)g(y^{-1}x)\dil(y)=\int_Sf(xy)\,g(y^{-1})\dil (y)\\
&=\int_Sf(xy^{-1})\,g(y)\dir (y)\,,
\end{align*}
for all functions $f,g$ in $C_c(S)$ and $x$ in $S$.

The integral kernel of $\psi(\D)$ is the function defined on $(S,\dir)\times (S,\dir)$  by
\begin{equation}\label{integralkernel}
K_{\psi(\D)}(x,y)=k_{\psi(\D)}(y^{-1}x)\,\delta(y)\qquad \forall x,y\in S\,.
\end{equation}
\begin{prop}\label{relazionenuclei}
Let $\psi$ be a bounded measurable function on $\RR^+$. Then $\kQ$ is radial and $\kD=\du\,\kQ$. The spherical transform of $k_{\LQ}$ is
$$\mathcal H k_{\psi(\LQ)}(s)=\psi(s^2)\qquad\forall s \in\RR^+\,.$$
\end{prop}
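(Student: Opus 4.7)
The statement contains three assertions: radiality of $\kQ$, the pointwise identity $\kD=\du\,\kQ$, and the spherical transform computation $\mathcal H\kQ(s)=\psi(s^2)$. I would establish them in this order.

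Radiality is best obtained from the fact that $(S,d)$, as a harmonic manifold, carries an isometric action of a compact subgroup $K$ of its full isometry group which fixes $e$ and whose orbits are the geodesic spheres centered at $e$; thus radial functions coincide with $K$-invariant functions. Since $\LQ$ differs from the intrinsic Laplace--Beltrami operator $\LB$ only by a constant, it commutes with this $K$-action, and hence by the spectral theorem so does $\psi(\LQ)$. A left-invariant operator that additionally commutes with $K$ has a $K$-invariant, hence radial, convolution kernel.

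The identity $\kD=\du\,\kQ$ then follows from the intertwining relation (\ref{relationship}). By the spectral theorem, (\ref{relationship}) upgrades to $\dum\,\psi(\D)(\du f)=\psi(\LQ)f$ for smooth compactly supported radial $f$. Writing both sides as left convolutions and using that $\delta$ is a multiplicative character on $S$, so that $\du(x)=\du(y)\,\du(y^{-1}x)$, one compares the integrands pointwise, and varying $f$ over radial bump functions forces $\kD=\du\,\kQ$.

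Finally, for the spherical transform, $\fs$ is an eigenfunction of $\LB$ with eigenvalue $s^2+Q^2/4$, hence of $\LQ$ with eigenvalue $s^2$, so the spectral theorem gives $\psi(\LQ)\fs=\psi(s^2)\,\fs$. On the other hand, the standard rule $\fs\ast h=\mathcal H h(s)\,\fs$ for radial $h$ in the commutative convolution algebra of radial functions on $S$ yields $\psi(\LQ)\fs=\fs\ast\kQ=\mathcal H\kQ(s)\,\fs$, and evaluating at $e$ (where $\fs(e)=1$) gives $\mathcal H\kQ(s)=\psi(s^2)$. The main technical subtlety throughout is that for merely bounded measurable $\psi$ the kernel $\kQ$ is only a tempered distribution and $\fs\notin\ld{\rho}$, so I would first prove everything for Schwartz $\psi$ (where all kernels are smooth and rapidly decreasing) and then pass to the general case by approximation in the strong operator topology, exploiting linearity of the three identities in $\psi$.
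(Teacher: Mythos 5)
The paper offers no argument here at all---its ``proof'' is a citation of \cite{A1} and \cite{ADY}---so your write-up is necessarily a different route. Your steps 2 and 3 are essentially the standard arguments and are sound modulo the approximation issues you already flag: $\delta$ is indeed a character on $S$, and a radial approximate identity upgrades $f\ast(\dum\kD-\kQ)=0$ for all radial test functions $f$ to $\dum\kD=\kQ$; likewise the eigenfunction/functional-equation argument for $\mathcal H\kQ(s)=\psi(s^2)$ is the classical one, best first run for Schwartz $\psi$ (e.g.\ via the heat semigroup) as you propose.

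The radiality step, however, rests on a false premise. You posit a compact group $K$ of isometries fixing $e$ \emph{whose orbits are the geodesic spheres centred at $e$}. Combined with the transitivity of $S$ on itself, that is exactly two-point homogeneity, and the two-point homogeneous Riemannian manifolds are (apart from the flat ones) precisely the rank one symmetric spaces. Most Damek--Ricci spaces are \emph{not} symmetric---the introduction of the paper recalls that they are counterexamples to the Lichnerowicz conjecture for precisely this reason---and for those the isotropy group at $e$ (essentially the orthogonal automorphisms of $\mathfrak n$ compatible with the dilations) does not act transitively on geodesic spheres; for instance it fixes the direction $H$. So the group $K$ you need does not exist in the generic case treated in this paper. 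The correct route, which is what \cite{DR2} and \cite{ADY} actually carry out, replaces the group average by the metric radialization operator (average over geodesic spheres), and the nontrivial input is that this operator commutes with the Laplace--Beltrami operator \emph{because $S$ is a harmonic manifold}, i.e.\ because the volume density depends only on the distance. Once that commutation is established, the rest of your step 1 (radialize the kernel, use that $\psi(\LQ)$ commutes with radialization) goes through; without it, the radiality of $\kQ$ is unsupported exactly on the nonsymmetric spaces that are the point of the generalization.
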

\begin{proof}
See \cite{A1}, \cite{ADY}.
\end{proof}
By Proposition \ref{relazionenuclei} it follows that if $\psi$ is a function in $C_c(\RR)$, then the convolution kernel of $\psi(\D)$ is equal to
$$\kD(x)=(2\pi)^{-1}\du(x)\,\mathcal A^{-1}\Big(\int_{\RR}\psi(s^2)\,\nep^{isv}\di s\Big)\big(R(x)\big)\qquad \forall x\in S\,,$$
where $R(x)=d(x,e)$ is as in \eqref{R}.
 By using the expression of the inverse Abel transform (\ref{inv1}) and (\ref{inv2}) we deduce that if $m_{\zg}$ is even, then
\begin{align*}
\kD(x)&=a_S^e\,\du(x)\,\mathcal D_{1,v}^{m_{\zg}/2}\mathcal D_{2,v}^{m_{\vg}/2}\Big(\int_{\RR}\psi(s^2)\,\nep^{isv}\di s\Big)\big(R(x)\big)\\
&=a_S^e\,\du(x)\int_{\RR}\psi(s^2)\,\mathcal D_{1,v}^{m_{\zg}/2}\mathcal D_{2,v}^{m_{\vg}/2}(\nep^{isv})\big(R(x)\big)\di s\,,
\end{align*}
while if $m_{\zg}$ is odd, then
\begin{align*}
\kD(x)&=a_S^o\,\du(x)\int_{R(x)}^{\infty}\mathcal D_{1,v}^{(m_{\zg}+1)/2}\mathcal D_{2,v}^{m_{\vg}/2}\Big(\int_{\RR}\psi(s^2)\,\nep^{isv}\di s\Big)(v) 
\di\nu_{R(x)}(v)\\
&=a_S^o\,\du(x)\int_{\RR}\psi(s^2)\int_{R(x)}^{\infty}\mathcal D_{1,v}^{(m_{\zg}+1)/2}\mathcal D_{2,v}^{m_{\vg}/2}(\nep^{isv})(v)\di\nu_{R(x)}(v)\di s\,,
\end{align*}
where $a_S^e=(2\pi)^{-1}\tilde a_S^e,$  $a_S^o=(2\pi)^{-1}\tilde a_S^o$ and 
$$\di\nu_R(v)=(\cosh v-\cosh R)^{-1/2}\sinh v \di v.$$
Thus for all functions $\psi$ in $C_c(\RR)$ we have that
\begin{align}\label{kpsi}
\kD(x)=\du(x)\int_{\RR}\psi(s^2)\,F_{R(x)}(s)\di s \,,
\end{align}
where for all $R>0$ and $s\in \RR$
\begin{equation}\label{FR}
F_{R}(s)=\begin{cases}
a_S^e\,\mathcal D_{1,v}^{m_{\zg}/2}\mathcal D_{2,v}^{m_{\vg}/2}(\nep^{isv})(R)&{\rm{if~}}m_{\zg}{\rm{~is~even}}\\
a_S^o\,\int_{R}^{\infty}\mathcal D_{1,v}^{(m_{\zg}+1)/2}\mathcal D_{2,v}^{m_{\vg}/2}(\nep^{isv})(v)\di\nu_R(v)&{\rm{if~}}m_{\zg}{\rm{~is~odd}}\,.
\end{cases}
\end{equation}
We are interested in finding the asymptotic behaviour of $F_R$ and its first derivative. To do so we need the following technical lemmata.

We denote by $\mathcal S^{\alpha}$ the symbol class
$$\mathcal S^{\alpha}=\{b\in C^{\infty}(\RR):~\|b\|_{\mathcal S^{\alpha},\,k}:=\sup_{s}(1+s^2)^{\frac{-\alpha+k}{2}}\,|b^{(k)}(s)|<\infty,\,{\rm{for~all~}}k\in \NN  \}\,.   $$
\begin{lem}\label{lemma1}
For all integers $p, q$ such that $p+q\geq 1$ and all $v\geq 1$ 
$$\mathcal D^p_{1,v}\,\mathcal D_{2,v}^{q}(\nep^{isv})(v)=\sum_{k=1}^{p+q}s^k\,q_k(v)\,\nep^{-(p+q/2)\,v}\,\nep^{isv}\,,$$
where $q_k$ is in $\mathcal S^0$ for all $k$.
\end{lem}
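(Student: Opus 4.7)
The plan is to argue by induction on $N = p+q \geq 1$, exploiting the structure of $\mathcal D_1 = -(1/\sinh v)\partial_v$ and $\mathcal D_2 = -(1/\sinh(v/2))\partial_v$ together with the factorizations $1/\sinh v = 2\nep^{-v}/(1-\nep^{-2v})$ and $1/\sinh(v/2) = 2\nep^{-v/2}/(1-\nep^{-v})$, valid for $v > 0$. These factorizations split each operator into an exponentially decaying factor times a smooth bounded remainder, which is precisely how the $\nep^{-(p+q/2)v}$ accumulates.

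For the base case $N = 1$, a direct computation yields $\mathcal D_1(\nep^{isv}) = -(is/\sinh v)\nep^{isv}$ and $\mathcal D_2(\nep^{isv}) = -(is/\sinh(v/2))\nep^{isv}$. Substituting the factorizations puts these in the claimed form with $k = 1$: the leftover multipliers $-2i/(1-\nep^{-2v})$ and $-2i/(1-\nep^{-v})$ are smooth and bounded on $[1,\infty)$ with exponentially decaying derivatives, hence, after any smooth extension to $\RR$, belong to $\mathcal S^0$.

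For the inductive step, assume the formula holds for a pair $(p,q)$ with $p+q = N$ and apply $\mathcal D_1$ to a typical summand $s^k q_k(v)\nep^{-(p+q/2)v}\nep^{isv}$. Carrying out $\partial_v$ and multiplying by $-1/\sinh v = -\nep^{-v} h(v)$ with $h(v) = 2/(1-\nep^{-2v})$ gives two kinds of contributions: (a) the derivative landing on $q_k(v)\nep^{-(p+q/2)v}$ yields an $s^k$ term with coefficient $-h(v)[q_k'(v) - (p+q/2)\,q_k(v)]$; (b) the derivative landing on $\nep^{isv}$ yields an $s^{k+1}$ term with coefficient $-ih(v)\,q_k(v)$. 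In both cases the exponential factor becomes $\nep^{-((p+1)+q/2)v}$, exactly the expected rate for $(p+1,q)$. Since $\mathcal S^0$ is an algebra and is closed under differentiation (with derivatives actually landing in $\mathcal S^{-1}\subset \mathcal S^0$), the new coefficient functions remain in $\mathcal S^0$. The case of $\mathcal D_2$ is identical with $v/2$ replacing $v$ in the decay rate and $h(v) = 2/(1-\nep^{-v})$.

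Three bookkeeping points ensure the conclusion: (i) no $s^0$ term is ever created, since the base case starts at $s^1$ and each operator application only preserves or raises the $s$-degree; (ii) the maximum $s$-degree rises by at most one per application, so after $p+q$ applications it is at most $p+q$; (iii) the exponential decay accumulates additively to $-(p+q/2)v$. The main obstacle is purely bookkeeping rather than conceptual: one must track $s$-degree, exponential exponent, and symbol class in parallel through the induction. The only technical subtlety is that the naturally produced coefficients live on $[1,\infty)$, but they extend to smooth functions on $\RR$ lying in $\mathcal S^0$, since they are bounded with exponentially decaying derivatives.
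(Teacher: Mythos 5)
Your proof is correct and takes essentially the same route as the paper, which gives only a one-line sketch: factor $1/\sinh v = \nep^{-v}\,\frac{2}{1-\nep^{-2v}}$ (and similarly $1/\sinh(v/2)$), note that the non-exponential factor lies in $\mathcal S^0$ for $v\geq 1$, and induct on $p$ and $q$, citing the analogous Lemma 5.3 in \cite{MT}. Your write-up simply carries out in detail the base case, the Leibniz bookkeeping in the inductive step, and the verification that $\mathcal S^0$ is stable under the operations needed.
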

\begin{proof}
Since $\frac{1}{\sinh v}=\frac{2}{1-\nep^{-2v}}\,\nep^{-v}$, where $\frac{2}{1-\nep^{-2v}}=\sum_{m=0}^{\infty}\nep^{-2mv}$ is in $\mathcal S^0$ for $v>1$, the lemma follows easily by induction on $q$ and $p$ (compare \cite[Lemma 5.3]{MT}).
\end{proof}
\begin{lem}\label{lemma2}
For all integers $p, q$ such that $p+q\geq 1$ and all $v$ in $[0,4]$
$$\mathcal D^p_{1,v}\,\mathcal D_{2,v}^{q}(\nep^{isv})(v)=\sum_{k=1}^{p+q}s^k\,q_k(v)\,v^{k-2(p+q)}\,\nep^{isv}\,,$$
where $q_k$ is in $\mathcal S^0$ for all $k$.
\end{lem}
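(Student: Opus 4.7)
My plan is to prove this by induction on $N := p+q$, adapting the argument for Lemma \ref{lemma1} from the large-$v$ regime to the small-$v$ regime. The key preliminary observation is that on $[0,4]$ both $1/\sinh v$ and $1/\sinh(v/2)$ are $1/v$ times a smooth even function: writing $\phi_1(v) = v/\sinh v$ and $\phi_2(v) = v/\sinh(v/2)$, these extend smoothly across $v=0$, and after multiplying by a smooth compactly supported cutoff that equals $1$ on a neighborhood of $[0,4]$ they can be regarded as elements of $\mathcal S^0$. Consequently each of $\mathcal D_1, \mathcal D_2$ can be written in the template form $\mathcal D = -\phi(v)\,v^{-1}\,\partial_v$ with $\phi \in \mathcal S^0$, and the same cutoff trick lets me treat $v$ itself, restricted to $[0,4]$, as an element of $\mathcal S^0$ when I need to absorb stray factors of $v$ into new smooth coefficients.

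The base case $N=1$ is immediate: for $p=1,\,q=0$ one has $\mathcal D_1(\nep^{isv}) = -is\,\phi_1(v)\,v^{-1}\,\nep^{isv}$, matching the claim with $q_1 = -i\phi_1 \in \mathcal S^0$, and the case $p=0,\,q=1$ is identical with $\phi_2$. For the inductive step, I apply one more operator $\mathcal D$ of the template form to a generic summand $s^k q_k(v) v^{k-2N}\nep^{isv}$ with $q_k \in \mathcal S^0$ and $1\leq k\leq N$. The product rule produces three pieces:
\begin{align*}
\mathcal D\bigl(s^k q_k(v) v^{k-2N}\nep^{isv}\bigr)
&= -s^k\,\phi(v)\,q_k'(v)\,v^{k-2N-1}\,\nep^{isv}\\
&\quad - s^k(k-2N)\,\phi(v)\,q_k(v)\,v^{k-2N-2}\,\nep^{isv}\\
&\quad - i\,s^{k+1}\,\phi(v)\,q_k(v)\,v^{k-2N-1}\,\nep^{isv}.
\end{align*}
The first two pieces keep the $s$-power at $s^k$ and carry $v$-exponents $k-2(N+1)+1$ and $k-2(N+1)$ respectively; absorbing an extra factor of $v$ (for the first piece) and the factor $\phi$ (for both) into new $\mathcal S^0$ coefficients, they match the target form at level $N+1$ with $k'=k$. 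The third piece has $s^{k+1}$ and $v$-exponent $(k+1)-2(N+1)$, matching with $k'=k+1$. As $k$ ranges over $\{1,\ldots,N\}$, the new index $k'$ ranges over $\{1,\ldots,N+1\}$, closing the induction.

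The only delicate point is checking that every coefficient produced above remains in $\mathcal S^0$, which follows from standard closure of $\mathcal S^0$ under differentiation and multiplication together with the cutoff trick. The choice between $\mathcal D_1$ and $\mathcal D_2$ at each step is immaterial, since both fit the same template $-\phi\,v^{-1}\,\partial_v$. I do not foresee any genuine obstacle: the whole content of the lemma is careful bookkeeping of powers of $s$ and $v$ in the small-$v$ regime, mirroring the bookkeeping in Lemma \ref{lemma1} for the opposite regime.
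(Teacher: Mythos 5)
Your proof is correct and follows essentially the same route as the paper, which simply writes $1/\sinh v = g(v)\,v^{-1}$ with $g\in\mathcal S^0$ on $[0,4]$ (and similarly for $1/\sinh(v/2)$) and invokes induction on $p$ and $q$; you have merely written out the Leibniz bookkeeping that the paper leaves to the reader. The power counting in $s$ and $v$ checks out in all three terms of the inductive step.
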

\begin{proof}
On the interval $[0,4]$ we have that $\frac{1}{\sinh v}=g(v)\,v^{-1}$, where $g$ is in $\mathcal S^0, $ and a similar statement holds for $\frac{1}{\sinh (v/2)}$. The lemma follows easily by induction on $q$ and $p$ (compare \cite[Lemma 5.8]{MT}).
\end{proof}
In the following proposition we study the asymptotic behaviour of $F_R$ and its first derivative.
\begin{prop}\label{F_R}
If $m_{\zg}$ is even, then
$$F_R(s)=\begin{cases}
\nep^{-QR/2}\,\nep^{iRs}\sum_{k=1}^{(n-1)/2}s^k\,q_k(R)&{\rm{if}}~R\geq 1\\
\nep^{-QR/2}\,\nep^{iRs}\sum_{k=1}^{(n-1)/2}s^k\,q_k(R)\,R^{1-n+k}&{\rm{if}}~R< 1\,,
\end{cases}
$$
and
$$\partial_RF_R(s)=\begin{cases}
\nep^{-QR/2}\,\nep^{iRs}\sum_{k=1}^{(n+1)/2}s^k\,p_k(R)&{\rm{if}}~R\geq 1\\
\nep^{-QR/2}\,\nep^{iRs}\sum_{k=1}^{(n+1)/2}s^k\,p_k(R)\,R^{-n+k}&{\rm{if}}~R< 1\,,
\end{cases}
$$
where $q_k$ and $p_k$ are functions in $\mathcal S^0$ uniformly in $R$ for all $k$.

If $m_{\zg}$ is odd, then for every $\tau$ in $[0,1/2]$
$$F_R(s)=\begin{cases}
\nep^{-QR/2}\,\nep^{iRs}\sum_{k=1}^{n/2}s^k\,b_k^{-1/2}(s)&{\rm{if}}~R\geq 1\\
\nep^{-QR/2}\,\nep^{iRs}\sum_{k=1}^{n/2}s^k\,b_k^{\tau-1/2}(s)\,R^{-n+k+\tau+1/2}&{\rm{if}}~R< 1\,,
\end{cases}
$$
and
$$\partial_RF_R(s)=\begin{cases}
\nep^{-QR/2}\,\nep^{iRs}\sum_{k=1}^{n/2+1}s^k\,p_k^{-1/2}(s)&{\rm{if}}~R\geq 1\\
\nep^{-QR/2}\,\nep^{iRs}\sum_{k=1}^{n/2+1}s^k\,p_k^{\tau-1/2}(s)\,R^{-n+k+\tau-1/2}&{\rm{if}}~R< 1\,,
\end{cases}
$$
where  $b_k^{\beta}(s)$ and $p_k^{\beta}(s)$ are functions depending also on $R$ but which are in $\mathcal S^{\beta}$ uniformly in $R$ for all $k$ and $\beta$.
\end{prop}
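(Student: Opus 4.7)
I prove the proposition by a case analysis according to the parity of $m_\zg$ and the size of $R$. In each case the strategy is to substitute the expansions of Lemma \ref{lemma1} (for $R\geq 1$) or Lemma \ref{lemma2} (for $R<1$) into the explicit formula \eqref{FR} and track constants. The combinatorial identities that make the arithmetic line up are $p+q/2=Q/2$ and $p+q=(n-1)/2$ for the choice $p=m_\zg/2$, $q=m_\vg/2$.

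\textbf{Even case.} For $F_R$, the expansion is immediate from the relevant lemma with $p=m_\zg/2$, $q=m_\vg/2$. For $\partial_R F_R$, the chain rule combined with $\partial_v=-\sinh v\cdot\mathcal D_{1,v}$ gives
\[\partial_R F_R(s)=-a_S^e\,\sinh R\cdot\mathcal D_{1,v}^{m_\zg/2+1}\mathcal D_{2,v}^{m_\vg/2}(\nep^{isv})(R),\]
and re-applying the lemmas with $p$ raised by one now produces $(n+1)/2$ summands. For $R\geq 1$ the factor $\sinh R\sim\nep^R$ converts $\nep^{-(Q/2+1)R}$ back to $\nep^{-QR/2}$; for $R<1$ the factor $R\sim\sinh R$ converts $R^{k-(n+1)}$ into $R^{k-n}$.

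\textbf{Odd case, $R\geq 1$.} I insert Lemma \ref{lemma1} with $p=(m_\zg+1)/2$, $q=m_\vg/2$ (so $p+q/2=Q/2+1/2$ and $p+q=n/2$) into the integral in \eqref{FR}. With $u=v-R$ and the identity $\cosh v-\cosh R=2\sinh(R+u/2)\sinh(u/2)$, the factor $(\sinh(R+u/2))^{-1/2}\sinh(R+u)$ contributes an $\nep^{R/2}$ which combines with the amplitude's $\nep^{-(Q/2+1/2)R}$ to produce exactly $\nep^{-QR/2}$. What remains is an oscillatory integral
\[I_k(s,R)=\int_0^{\infty}h_{k,R}(u)\,(\sinh(u/2))^{-1/2}\,\nep^{isu}\,du,\]
whose amplitude $h_{k,R}$ is smooth in $u$, exponentially decaying (as $Q/2-1/4>0$ on non-abelian $H$-type groups), and bounded with all its derivatives uniformly in $R\geq 1$. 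Splitting at $u=1$, the tail decays rapidly in $s$ by integration by parts, and the near-origin piece is essentially the Fourier transform of an integrable $u^{-1/2}$-singularity, hence a symbol in $\mathcal S^{-1/2}$. For $\partial_R F_R$, the substitution $t=\cosh v-\cosh R$ (so that $\sinh v\,dv=dt$) legitimises differentiation under the integral; using $\partial_R v=\sinh R/\sinh v$ and $\partial_v=-\sinh v\,\mathcal D_{1,v}$ one obtains the analogous formula with $p$ raised by one, and the extra factor $\sinh R$ compensates the additional $\nep^{-R}$ decay.

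\textbf{Odd case, $R<1$, and the main obstacle.} I split $\int_R^{\infty}=\int_R^{4}+\int_{4}^{\infty}$; the $[4,\infty)$ contribution reduces to the $R\geq 1$ analysis (localised to large $v$) and is bounded uniformly in $R<1$, hence absorbed into the prefactor $R^{-n+k+\tau+1/2}$ for every $\tau\in[0,1/2]$. For $\int_R^{4}$ I substitute Lemma \ref{lemma2} to get integrands of the form $q_k(v)\,v^{k-n}(\cosh v-\cosh R)^{-1/2}\sinh v\,\nep^{isv}$. After $v=R+u$ the weight factors as $\asymp u^{-1/2}(R+u/2)^{-1/2}$ times a smooth bounded factor, so the basic oscillatory integral one must analyse behaves like $\min(1,(sR)^{-1/2})$. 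The one-parameter family indexed by $\tau$ is produced by the pointwise bound $\min(1,(sR)^{-1/2})\leq (sR)^{\tau-1/2}$, valid for every $\tau\in[0,1/2]$, which trades oscillatory decay (dropping from $\mathcal S^{-1/2}$ down to $\mathcal S^{\tau-1/2}$) for an improvement in the $R$-singularity. Combining this with careful bookkeeping of the $v^{k-n}$ factor and the jacobian yields the exponent $-n+k+\tau+1/2$; $\partial_R F_R$ follows by the same change-of-variables device of the previous case, with $p$ raised by one and an extra $\sinh R\sim R$, producing the $n/2+1$ terms and exponent $-n+k+\tau-1/2$. The chief difficulty throughout is controlling the interaction, on an interval of length comparable to $R$, between the Abel singularity at $v=R$ and the $v^{k-n}$ singularity at $v=0$; the $\tau$-interpolation is precisely the analytic device that encodes this interaction in a family of uniform bounds.
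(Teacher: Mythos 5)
Your proposal is correct and follows the same architecture as the paper's proof: substitute Lemmas \ref{lemma1} and \ref{lemma2} into \eqref{FR}, shift $v\mapsto v+R$, factor the Abel weight via $\cosh v-\cosh R=2\sinh\bigl(R+\tfrac{u}{2}\bigr)\sinh\bigl(\tfrac{u}{2}\bigr)$, and read off the symbol classes from the resulting oscillatory integrals (you re-derive the content of the cited lemmas of M\"uller--Thiele rather than quoting them, but the estimates you sketch are exactly theirs, including the $\tau$-interpolation between $\mathcal S^0$ and $\mathcal S^{-1/2}$ that trades oscillatory decay for the $R$-singularity). The one genuine divergence is your treatment of $\partial_RF_R$: you commute the $R$-derivative into the defining formula using $\partial_R=-\sinh R\,\mathcal D_{1}$ (and, in the odd case, the substitution $t=\cosh v-\cosh R$ to justify differentiating under the integral), which raises $p$ by one and produces the compensating factor $\sinh R$; the paper instead differentiates the already-derived expressions $\nep^{-QR/2}\nep^{iRs}\sum_k s^k R^{\gamma_k}b_k^{\tau-1/2}(s)$ directly, which in the odd case silently omits the term coming from the $R$-dependence of $b_k^{\tau-1/2}$ (or tacitly asserts that $\partial_Rb_k^{\tau-1/2}$ lies in the same symbol class). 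Your route is self-contained, avoids that issue, and lands on the same exponents, so it is arguably the cleaner of the two; the only point where you are less complete than the paper's citations is that you verify the symbol bounds only at derivative order zero (the $\min(1,(sR)^{-1/2})$ bound), whereas membership in $\mathcal S^{\tau-1/2}$ uniformly in $R$ also requires the corresponding gain $(1+|s|)^{-j}$ for every $s$-derivative — a routine but necessary supplement.
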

\begin{proof}
We first study the case when $m_{\zg}$ is even. In this case by (\ref{FR}) we have that
$$F_R(s)=a_S^e\,\mathcal D_{1,v}^{m_{\zg}/2}\mathcal D_{2,v}^{m_{\vg}/2}(\nep^{isv})(R)\,.$$
By Lemma \ref{lemma1} it follows that if $R\geq 1$, then
$$F_R(s)=\nep^{-QR/2}\,\nep^{iRs}\sum_{k=1}^{(n-1)/2}s^k\,q_k(R)\,,$$
while if $R<1$, then
$$F_R(s)=\nep^{-QR/2}\,\nep^{iRs}\sum_{k=1}^{(n-1)/2}s^k\,q_k(R)\,R^{1-n+k}\,,$$
where $q_k$ is in $\mathcal S^0$ for all $k$. 

By deriving with respect to $R$ the expressions above we obtain that if $R\geq 1$, then
\begin{align*}
\partial_RF_R(s)&=\nep^{-QR/2}\,\nep^{iRs}\sum_{k=1}^{(n-1)/2}s^k\big[(-Q/2+is)\,q_k(R)+\partial_Rq_k(R)\big]\\
&= \nep^{-QR/2}\,\nep^{iRs}\sum_{k=1}^{(n+1)/2}s^k\,p_k(R)\,,
\end{align*}
while if $R<1$, then
\begin{align*}
\partial_RF_R(s)&=\,\nep^{-QR/2}\,\nep^{iRs}\sum_{k=1}^{(n-1)/2}s^k\,R^{1-n+k}\,\big[\big(-Q/2+is+(1-n+k)R^{-1}\big)q_k(R)+\partial_Rq_k(R)\big]\\
&=\,\nep^{-QR/2}\,\nep^{iRs}\sum_{k=1}^{(n+1)/2}s^k\,R^{-n+k}\,p_k(R)\,,
\end{align*}
where $p_k$ is in $\mathcal S^0$ for all $k$.

This proves the proposition in the case when $m_{\zg}$ is even.

We now study the case when $m_{\zg}$ is odd. In this case by (\ref{FR}) we deduce that
$$F_R(s)=a_S^o\,\int_{R}^{\infty}\mathcal D_{1,v}^{(m_{\zg}+1)/2}\mathcal D_{2,v}^{m_{\vg}/2}(\nep^{isv})(v)\di\nu_R(v)\,,$$
where $\di\nu_R(v)=(\cosh v-\cosh R)^{-1/2}\sinh v \di v$. By Lemma \ref{lemma1} it follows that if $R\geq 1$, then
\begin{align*}
F_R(s)&=\sum_{k=1}^{n/2}\int_{R}^{\infty} s^k\,q_k(v)\nep^{-(Q/2+1/2)v}\,\nep^{isv}\di\nu_R(v)\\
&=\sum_{k=1}^{n/2}s^k\,\int_{0}^{\infty}q_k(v+R)\nep^{-(Q/2+1/2)(v+R)}\,\nep^{is(v+R)}\times\\
&\phantom{\sum_{k=1}^{n/2}s^k\,\int_{0}^{\infty}}~\times \big(\cosh (v+R)-\cosh R\big)^{-1/2}\sinh(v+R) \di v\\
&=\sum_{k=1}^{n/2}s^k\,\nep^{-QR/2}\,\nep^{isR}\,\int_{0}^{\infty}q_k(v+R)~\frac{\sinh(v+R)}{\nep^{v+R}}\times\\
&\phantom{ \sum_{k=1}^{n/2}s^k\,\nep^{-QR/2}\,\nep^{isR}\,\int_{0}^{\infty} }~\times \big[\big(\cosh (v+R)-\cosh R\big) \nep^{-(v+R)}\big]^{-1/2}\,\nep^{-Qv/2}\,\nep^{isv}\di v\\
&=\sum_{k=1}^{n/2}s^k\,\nep^{-QR/2}\,\nep^{isR}\,b_k^{-1/2}(s)\,,
\end{align*}
by \cite[Lemmata 5.4, 5.5]{MT}, where $b_k^{-1/2}$ is in $\mathcal S^{-1/2}$ uniformly with respect to $R\geq 1$. If $R<1$, then 
\begin{align*}
F_R(s)&=F_R^1(s)+F_R^2(s)\\
&=\int_R^{\infty}\chi(v)\,\mathcal D_{1,v}^{(m_{\zg}+1)/2}\,\mathcal D_{2,v}^{m_{\vg}/2}(\nep^{isv})(v)\di\nu_R(v)\\
&+\int_2^{\infty}\big(1-\chi(v)\big)\,\mathcal D_{1,v}^{(m_{\zg}+1)/2}\,\mathcal D_{2,v}^{m_{\vg}/2}(\nep^{isv})(v)\di\nu_R(v)\,
\end{align*}
where $\chi$ is a $C^{\infty}$ function supported in $[-4,4]$ equal to $1$ in $[-2,2]$. Let $\tau$ be in $[0,1/2]$. By \cite[Lemmata 5.4, 5.5]{MT} $F_R^2$ is in $\mathcal S(\RR)$ uniformly with respect to $R<1$. We now study the behaviour of $F_R^1$, which by Lemma \ref{lemma2} is equal to
\begin{align*}
F_R^1(s)&=\sum_{k=1}^{n/2}s^k\,\int_R^{\infty}\chi(v)\,q_k(v)\,v^{k-n+1}~\frac{\sinh v}{v}\,(\cosh v-\cosh R)^{-1/2}\,\nep^{isv}\di v\,.
\end{align*}
By \cite[Lemma 5.9]{MT} there exists a function $\gamma$ in $\mathcal S^0$ 
such that $\inf_{v\in [0,4]}\gamma(v)>0$ and the previuos sum is equal to
\begin{align*}
&\sum_{k=1}^{n/2}s^k\,\int_R^{\infty}\chi(v)\,q_k(v)\,v^{k-n+1}~\frac{\sinh v}{v}\,\gamma(v)^{-1/2}\,(v+R)^{-1/2}\,(v-R)^{-1/2}\,\nep^{isv}\di v\\
=&\sum_{k=1}^{n/2}s^k\,\int_0^{\infty}\chi(v+R)\,\gamma(v+R)^{-1/2}\,q_k(v+R)\,(v+R)^{k-n+1}~\frac{\sinh (v+R)}{v+R}\times\\
&\phantom{\sum_{k=1}^{n/2}s^k\,\int_0^{\infty}}\,\times (2R+ v)^{-1/2}\,v^{-1/2}\,\nep^{is(v+R)}\di v\\
=&\,\nep^{isR}\,\sum_{k=1}^{n/2}s^k\,\int_0^{\infty}\gamma_{k,\,R}(v)\,(v+R)^{k-n+1}\,(2R+ v)^{-1/2}\,v^{-1/2}\,\nep^{isv}\di v\\
=&\,\nep^{isR}\,\sum_{k=1}^{n/2}s^k\,f_{k,\,R}(s)\,.
\end{align*}
By \cite[Lemma 5.10]{MT} we deduce that $R^{n-k-1/2-\tau}\,f_{k,\,R}$ is in $\mathcal S^{\tau-1/2}$ uniformly with respect to $R<1$. Thus
$$F_R^1(s)= \nep^{-QR/2}\,\nep^{isR}\sum_{k=1}^{n/2}s^k\, R^{-n+k+1/2+\tau}\,b_k^{\tau-1/2}(s)\,, $$
where $b_k^{\tau-1/2}(s),$ depending also on $R,$  is in $\mathcal S^{\tau-1/2}$ uniformly with respect to $R<1$. 

This concludes the study of the asymptotic behaviour of $F_R$ when $m_{\zg}$ is odd. To study the behaviour of its derivative, we derive with respect to $R$ the expressions above. If $R\geq 1$, then 
\begin{align*}
\partial_RF_R(s)&=\sum_{k=1}^{n/2}s^k\,\nep^{-QR/2}\,\nep^{isR}\,(-Q/2+is)\,b_k^{-1/2}(s)\\
&=\sum_{k=1}^{n/2+1}s^k\,\nep^{-QR/2}\,\nep^{isR}\,p_k^{-1/2}(s)\,,
\end{align*}
where $p_k^{-1/2}$ is in $\mathcal S^{-1/2}$ uniformly with respect to $R\geq 1$. If $R<1$ and $\tau\in[0,1/2]$, then
\begin{align*}
\partial_RF_R(s)&=\nep^{-QR/2}\,\nep^{isR}\sum_{k=1}^{n/2}s^k\, R^{-n+k+1/2+\tau}\,b_k^{\tau-1/2}(s)\big(-Q/2+is+(-n+k+1/2+\tau)\,R^{-1}   \big)\\
&=\nep^{-QR/2}\,\nep^{isR}\sum_{k=1}^{n/2+1}s^k\, R^{-n+k-1/2+\tau}\,p_k^{\tau-1/2}(s)\,,
\end{align*}
where $p_k^{\tau-1/2}$ is in $\mathcal S^{\tau-1/2}$ uniformly with respect to $R< 1$. 

This proves the proposition also in the case when $m_{\zg}$ is odd.
\end{proof}

\section{Spectrally localized estimates for the wave propagator}\label{estimatekernel}
In this section we state pointwise estimates for the convolution kernel of spectrally localized wave propagators associated to the Laplacian $L$ on \DR spaces.

Let $t\in \RR$, $\lambda >0,$ and let $\psi$ be an even bump function in $C^{\infty}(\RR)$ supported in $[-2,2]$. If $\lambda \geq 1$ we shall in addition suppose that $\psi$ vanishes on $[-1,1]$. We denote by $k^t_{\lambda}$ the convolution kernel of $m^t_{\lambda}(\D)=\psi\Big(\frac{\sqrt {\D}}{\lambda} \Big)\,\cos (t\,\sqrt{\D})$. We know by (\ref{kpsi}) that
\begin{equation}\label{kernel}
k^t_{\lambda}(x)=\du(x)\,\int_{\RR}\psi \Big(\frac{s}{\lambda} \Big)\,\cos (t\,s)\,F_{R(x)}(s)\di s\,.
\end{equation}
In the following theorem we estimate the kernel $k^t_{\lambda}$.
\begin{teo}\label{Glambda}
The kernel $k^t_{\lambda}$ is of the form
$$k^t_{\lambda}(x)= \du(x)\,\nep^{-QR(x)/2}\,\big[G_{\lambda}\big(R(x),R(x)-t\big)+ G_{\lambda}\big(R(x),R(x)+t\big) \big]\,,$$
where the function $G_{\lambda}$ satisfies for every $N$ in $\NN$ the following estimates:
\begin{itemize}
\item[(i)] if $m_{\zg}$ is even, then
$$\big|G_{\lambda}(R,u)\big|\leq \begin{cases}
C_N\,(1+|\lambda\,u| )^{-N}\,\sum_{k=1}^{(n-1)/2}\lambda^{k+1}&{\rm{if}}~R\geq 1\\
C_N\,(1+|\lambda\,u| )^{-N}\,\sum_{k=1}^{(n-1)/2}R^{1-n+k}\,\lambda^{k+1}&{\rm{if}}~R< 1\,;
\end{cases}
$$
\item[(ii)] if $m_{\zg}$ is odd, then for every $\tau$ in $[0,1/2]$
$$\big|G_{\lambda}(R,u)\big|\leq \begin{cases}
C_N\,(1+|\lambda\,u| )^{-N}\,\lambda^{(n+1)/2}&{\rm{if}}~R\geq 1~{\rm{and}}~\lambda\geq 1\\
C_N\,(1+|\lambda\,u| )^{-N}\,\lambda^{2}&{\rm{if}}~R\geq 1~{\rm{and}}~\lambda< 1\\
C_N\,(1+|\lambda\,u| )^{-N}\,\sum_{k=1}^{n/2}R^{-n+k+1/2+\tau}\,\lambda^{k+1/2+\tau}&{\rm{if}}~R< 1~{\rm{and}}~\lambda\geq 1\\
C_N\,(1+|\lambda\,u| )^{-N}\,R^{-n+3/2+\tau}\,\lambda^{2}&{\rm{if}}~R< 1~{\rm{and}}~\lambda< 1\,,
\end{cases}
$$
\end{itemize}
where the constants $C_N$ depend only on $\tau$ and on the $C^N$-norms of $\psi$.
\end{teo}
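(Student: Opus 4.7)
The plan is to insert the explicit formulas for $F_R(s)$ given by Proposition~\ref{F_R} into the integral representation \eqref{kernel}, split $\cos(ts) = (e^{its}+e^{-its})/2$, and read off $G_\lambda(R,u)$ (with $u = R\pm t$) as a rescaled Fourier transform. Writing $F_R(s) = e^{-QR/2}\,e^{iRs}\,\Sigma_R(s)$, where $\Sigma_R(s)$ is the finite sum appearing in Proposition~\ref{F_R}, substitution produces
\[
k^t_\lambda(x) = \delta^{1/2}(x)\,e^{-QR(x)/2}\cdot \frac{1}{2}\sum_{\varepsilon=\pm 1}\int_\RR \psi(s/\lambda)\,\Sigma_{R(x)}(s)\,e^{i(R(x)+\varepsilon t)s}\,ds,
\]
so the natural definition is $G_\lambda(R,u) := \tfrac{1}{2}\int_\RR \psi(s/\lambda)\,\Sigma_R(s)\,e^{ius}\,ds$, with the two summands corresponding to $u = R-t$ and $u = R+t$.

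In each sub-case of Proposition~\ref{F_R}, $\Sigma_R(s)$ is a finite sum of terms of the form $c_k(R)\,s^k\,a_k(s)$, with $c_k(R)$ an explicit power of $R$ and $a_k$ either $\equiv 1$ (even case) or a symbol $b_k^\beta\in\mathcal{S}^\beta$ with $\beta\in\{-1/2,\,\tau-1/2\}$ (odd case). Rescaling $s=\lambda\sigma$ yields
\[
\int_\RR \psi(s/\lambda)\,s^k\,a_k(s)\,e^{ius}\,ds = \lambda^{k+1}\int_\RR \psi(\sigma)\,\sigma^k\,a_k(\lambda\sigma)\,e^{i\lambda u\sigma}\,d\sigma,
\]
which is $\lambda^{k+1}$ times the Fourier transform of $h_{k,\lambda}(\sigma):=\psi(\sigma)\sigma^k a_k(\lambda\sigma)$ evaluated at $-\lambda u$. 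Standard integration by parts then produces the decay $(1+|\lambda u|)^{-N}$ as soon as one controls $\|h_{k,\lambda}\|_{C^N}$ uniformly in $\lambda$; the prefactor $\lambda^{k+1}$ together with $c_k(R)$ then accounts for the explicit powers of $\lambda$ and $R$ appearing in the theorem.

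In the even case $a_k\equiv 1$, so $h_{k,\lambda}$ lies in a bounded subset of $C^\infty_c$ and (i) is immediate. In the odd case with $\lambda<1$, one has $|\lambda\sigma|\le 2$ on $\supp\psi$, so $b_k^\beta(\lambda\sigma)$ and all its derivatives are uniformly bounded and the same argument gives the $\lambda<1$ bounds in (ii). For $\lambda\ge 1$, the symbol estimates combined with the chain rule yield
\[
\bigl|\partial_\sigma^j\,b_k^\beta(\lambda\sigma)\bigr| = \lambda^j\,\bigl|b_k^{\beta,(j)}(\lambda\sigma)\bigr| \lesssim \lambda^j\,(1+\lambda^2\sigma^2)^{(\beta-j)/2}\asymp \lambda^\beta\qquad\text{for } \sigma\in[1,2],
\]
so $\|h_{k,\lambda}\|_{C^N}\lesssim \lambda^\beta$ uniformly. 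This provides the extra factor $\lambda^\beta$, and combining with $\lambda^{k+1}$ and $c_k(R)$ one obtains the $\lambda\ge 1$ bounds in (ii), keeping either the full sum in $k$ (when the $R$-exponent still depends on $k$, i.e.\ $R<1$) or only its dominant term $k=n/2$ (when it does not, i.e.\ $R\ge 1$, giving $\lambda^{(n+1)/2}$).

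The main obstacle is the uniform estimate of $\|h_{k,\lambda}\|_{C^N}$ in the odd case: the chain-rule factor $\lambda^j$ arising at each differentiation of $b_k^\beta(\lambda\sigma)$ must be \emph{exactly} compensated by the symbol decay $(1+(\lambda\sigma)^2)^{-j/2}$ of $b_k^{\beta,(j)}$ on $\supp\psi$, so that the net growth is the sharp $\lambda^\beta$ rather than a larger power. Once this uniform $C^N$ bound is in hand, the remaining argument is a routine Fourier-decay estimate.
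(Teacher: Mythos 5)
Your proposal is correct and follows essentially the same route as the paper: substitute the expressions for $F_R$ from Proposition~\ref{F_R} into \eqref{kernel}, split the cosine into two exponentials, and read off $G_\lambda(R,u)$ as the resulting oscillatory integral, then sum over $k$ keeping the dominant term. The only difference is that the paper delegates the key bound $\bigl|\int\psi(s/\lambda)b(s)s^k e^{ius}\,ds\bigr|\lesssim(1+|\lambda u|)^{-N}\lambda^{k+1+\min(\beta,0)\cdot\mathbf{1}_{\lambda\ge1}}$ to \cite[Lemma 6.2]{MT}, whereas you reprove it correctly by rescaling and integration by parts, with the right accounting of the chain-rule factor $\lambda^j$ against the symbol decay $(1+\lambda^2\sigma^2)^{(\beta-j)/2}\asymp\lambda^{\beta-j}$ on $\supp\psi$ when $\lambda\ge 1$.
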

\begin{proof}
We consider the case when $m_{\zg}$ is odd. If $R\geq 1$, then by Proposition \ref{F_R} we deduce that
\begin{align*}
k^t_{\lambda}(x)&= \du(x)\,\nep^{-QR(x)/2}\,\sum_{k=1}^{n/2}\int_{\RR}\psi\Big(\frac{s}{\lambda}\Big)\,b^{-1/2}_k(s)\,s^k\,\frac{\nep^{i\big(R(x)-t\big)s}+\nep^{i\big(R(x)+t\big)s}}{2i}  \di s\\
&=\du(x)\,\nep^{-QR(x)/2}\,\big[G_\lambda\big(R(x),\,R(x)-t\big)+G_\lambda\big(R(x),\,R(x)+t\big)\big]\,,
\end{align*}
where $G_\lambda(R,u)=\sum_{k=1}^{n/2}\int_{\RR}\psi\Big(\frac{s}{\lambda}\Big)\,b_k^{-1/2}(s)\,s^k\,\frac{\nep^{i\,u\,s}}{2i}  \di s\,.$ By \cite[Lemma 6.2]{MT} it follows that
\begin{align*}
|G_\lambda(R,\,u)|&\leq \begin{cases}
C_N\,\big(1+|\lambda\,u|\big)^{-N}\,\sum_{k=1}^{n/2}\lambda^{k+1/2}&{\rm{if}} ~\lambda \geq 1\\
C_N\,\big(1+|\lambda\,u|\big)^{-N}\,\sum_{k=1}^{n/2}\lambda^{k+1}&{\rm{if}} ~\lambda < 1
\end{cases}\\
&\leq \begin{cases}
C_N\,\big(1+|\lambda\,u|\big)^{-N}\,\lambda^{(n+1)/2}&{\rm{if}} ~\lambda \geq 1\\
C_N\,\big(1+|\lambda\,u|\big)^{-N}\,\lambda^{2}&{\rm{if}} ~\lambda < 1\,.
\end{cases}
\end{align*}
Take now $\tau$ in $[0,1/2]$. If $R<1$, then by Proposition \ref{F_R} we deduce that
\begin{align*}
k^t_{\lambda}(x)&= \du(x)\,\nep^{-QR(x)/2}\,\sum_{k=1}^{n/2}R(x)^{-n+k+\tau+1/2}\,\int_{\RR}\psi\Big(\frac{s}{\lambda}\Big)\,b^{\tau-1/2}_k(s)\,s^k\,\frac{\nep^{i\big(R(x)-t\big)s}+\nep^{i\big(R(x)+t\big)s}}{2i}  \di s\\
&=\du(x)\,\nep^{-QR(x)/2}\,\big[G_\lambda\big(R(x),\,R(x)-t\big)+G_\lambda\big(R(x),\,R(x)+t\big)\big]\,,
\end{align*}
where $G_\lambda(R,u)=\sum_{k=1}^{n/2}  R^{-n+k+\tau+1/2}\, \int_{\RR}\psi\Big(\frac{s}{\lambda}\Big)\,b^{\tau-1/2}_k(s)\,s^k\,\frac{\nep^{i\,u\,s}}{2i}  \di s\,.$ By \cite[Lemma 6.2]{MT} it follows that
\begin{align*}
|G_\lambda(R,\,u)|&\leq \begin{cases}
C_N\,\big(1+|\lambda\,u|\big)^{-N}\,\sum_{k=1}^{n/2}R^{-n+k+\tau+1/2}\,\lambda^{k+\tau+1/2}&{\rm{if}}~ \lambda \geq 1\\
C_N\,\big(1+|\lambda\,u|\big)^{-N}\,\sum_{k=1}^{n/2}R^{-n+k+\tau+1/2}\,\lambda^{k+1}&{\rm{if}}~ \lambda < 1
\end{cases}\\
&\leq \begin{cases}
C_N\,\big(1+|\lambda\,u|\big)^{-N}\,\sum_{k=1}^{n/2}R^{-n+k+\tau+1/2}\,\lambda^{k+\tau+1/2}&{\rm{if}}~ \lambda \geq 1\\
C_N\,\big(1+|\lambda\,u|\big)^{-N}\,R^{-n+3/2+\tau}\,\lambda^{2}&{\rm{if}}~ \lambda < 1\,.
\end{cases}
\end{align*}
This proves the theorem when $m_{\zg}$ is odd.

The proof in the case when $m_{\zg}$ is even is  similar and easier, which is why we  omit it.
\end{proof}
As a consequence of Theorem \ref{Glambda} we obtain estimates of the $L^1$-norms of the convolution kernel of $\psi\Big(\frac{\sqrt{\D}}{\lambda}\Big)\,\cos\Big(t\frac{\sqrt{\D}}{\lambda}\Big)$. 
\begin{prop}\label{Wtlambda}
Let $W^t_{\lambda}=k^{t/{\lambda}}_{\lambda}$ denote the convolution kernel of $\psi\Big(\frac{\sqrt{\D}}{\lambda}\Big)\,\cos\Big(t\frac{\sqrt{\D}}{\lambda}\Big)$ and let $\varepsilon\ge 0.$ 
\begin{itemize}
\item[(i)] If $\lambda \geq 1$, then
$$\int_S|W^t_{\lambda}(x)|\,R(x)^{\varepsilon}\dir (x)\lesssim \begin{cases} 
\lambda^{-\varepsilon }\,(1+t)^{(n-1)/2+\varepsilon}&{\rm{if~}} t<\lambda\\
\lambda^{-\varepsilon }\,\lambda^{(n-3)/2}\,t^{1+\varepsilon }&{\rm{if~}} t\geq \lambda;
\end{cases}
$$
\item[(ii)] if $\lambda <1$, then
$$\int_S|W^t_{\lambda}(x)|\,R(x)^{\varepsilon}\dir (x)\lesssim  \lambda^{-\varepsilon }\,(1+t)^{1+\varepsilon}\qquad \forall t\in\RR\,.$$
\end{itemize}
In particular, 
\begin{itemize}
\item[(iii)] if $\lambda \geq 1$, then
$$\int_S|W^t_{\lambda}(x)|\,\big(1+\lambda\,R(x) \big)^{\varepsilon}\dir (x)\lesssim  (1+t)^{(n-1)/2+\varepsilon}\qquad \forall t\in\RR\,,$$
\item[(iv)] if $\lambda <1$, then
$$\int_S|W^t_{\lambda}(x)|\,\big(1+\lambda\,R(x) \big)^{\varepsilon}\dir (x)\lesssim  (1+t)^{1+\varepsilon}\qquad \forall t\in\RR\,.$$
\end{itemize}
The constants in these estimates depend only on the $C^N$-norms of $\psi$.
\end{prop}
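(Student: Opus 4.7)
The plan is to combine Theorem~\ref{Glambda} with the integration formula of Lemma~\ref{intduf}, and then estimate the resulting one-dimensional integrals by a localized change of variables.

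First, since $W^t_\lambda = k^{t/\lambda}_\lambda$, Theorem~\ref{Glambda} expresses $|W^t_\lambda(x)|$ as $\du(x)\,\nep^{-QR(x)/2}$ times a sum of two terms of the form $|G_\lambda(R(x),R(x)\pm t/\lambda)|$. The product $|W^t_\lambda|\,R^\varepsilon$ is thus $\du$ times a radial function of $R(x)$, so Lemma~\ref{intduf} gives
\[
\int_S |W^t_\lambda|\,R^\varepsilon\,\dir \lesssim \int_0^\infty r^\varepsilon\,\nep^{-Qr/2}\,J(r)\sum_{\pm} |G_\lambda(r,r\pm t/\lambda)|\,\di r.
\]
The exponential factor $\nep^{-Qr/2}$ cancels the main growth of $J(r)$, so that $\nep^{-Qr/2}\,J(r)\lesssim r^{n-1}$ on $[0,1]$ and $\lesssim r$ on $[1,\infty)$.

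Next I would split the integral into $r<1$ and $r\geq 1$, apply the corresponding bounds of Theorem~\ref{Glambda}, and analyze each piece. The ``$+$'' term is negligible: its factor $(1+|\lambda(r+t/\lambda)|)^{-N}=(1+\lambda r + t)^{-N}$ decays in both variables, so it is always dominated by the target bounds. For the ``$-$'' term, $(1+|\lambda r-t|)^{-N}$ is a bump of width $1/\lambda$ centered at $r=t/\lambda$. The change of variable $s=\lambda r-t$ converts the relevant integral into
\[
\lambda^{-1-\gamma}\int_{\lambda a-t}^{\lambda b-t}(s+t)^\gamma\,(1+|s|)^{-N}\,\di s,
\]
which, for $N$ large, is $\lesssim \lambda^{-1-\gamma}(1+t)^\gamma$ when $0$ lies in the new interval, and rapidly decays in the boundary distance otherwise. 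Carrying this out case by case yields (i) and (ii): for $\lambda\geq 1$ and $m_{\zg}$ odd, the $r<1$ part contributes $\lambda^{-\varepsilon}\sum_{k=1}^{n/2}(1+t)^{k-1/2+\tau+\varepsilon}$, and the sharp choice $\tau=0$ produces the exponent $(n-1)/2+\varepsilon$ matching the $t<\lambda$ branch of (i); the $r\geq 1$ part yields $\lambda^{(n-3)/2-\varepsilon}(1+t)^{1+\varepsilon}$, matching the $t\geq\lambda$ branch. The remaining cases ($\lambda<1$, even $m_{\zg}$) go through in the same way and are simpler, since for even $m_{\zg}$ no parameter $\tau$ intervenes and the $\lambda<1$ bounds of Theorem~\ref{Glambda} are uniform in $r$.

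Finally, (iii) and (iv) follow from (i) and (ii) by writing $(1+\lambda R)^\varepsilon \lesssim 1 + \lambda^\varepsilon R^\varepsilon$: the ``$1$'' contribution is estimated by the $\varepsilon=0$ case, while the $\lambda^\varepsilon R^\varepsilon$ contribution uses $\lambda^\varepsilon$ times the general-$\varepsilon$ bound, in which the extra $\lambda^\varepsilon$ cancels the $\lambda^{-\varepsilon}$ already present; in the sub-regime $t\geq\lambda\geq 1$ one additionally uses $\lambda^{(n-3)/2}\leq t^{(n-3)/2}$ to convert $\lambda^{(n-3)/2}t^{1+\varepsilon}$ into $(1+t)^{(n-1)/2+\varepsilon}$. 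The main obstacle I expect is twofold: making the sharp choice $\tau=0$ in the odd $m_{\zg}$, $r<1$ case so as to recover the exponent $(n-1)/2$ rather than the naive $n/2$; and checking, in the regime $t<\lambda$ with $\lambda\geq 1$, that the seemingly large $\lambda^{(n-3)/2}$ growth from the $r\geq 1$ part is in fact absorbed by the rapid decay factor $(1+\lambda-t)^{-N+O(1)}$ which arises because $s=\lambda r - t$ stays bounded away from $0$ in that regime.
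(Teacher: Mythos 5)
Your proposal is correct and follows essentially the same route as the paper: express $W^t_\lambda$ via Theorem \ref{Glambda}, integrate with Lemma \ref{intduf} (so that $\nep^{-Qr/2}J(r)\lesssim r^{n-1}$ for $r<1$ and $\lesssim r$ for $r\ge 1$), choose $\tau=0$ when $\lambda\ge 1$, and reduce to one-dimensional integrals of the form $\int r^{\gamma}(1+|\lambda r-t|)^{-N}\di r$. The only cosmetic difference is that you evaluate these integrals directly by the substitution $s=\lambda r-t$, whereas the paper cites \cite[Lemma 6.4]{MT} (and takes $\tau=1/2$ for $\lambda<1$, which, as you note, is immaterial there), and you supply the short verification of (iii)--(iv) that the paper leaves as an ``easy consequence.''
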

\begin{proof}
We first assume  that $m_{\zg}$ is odd. Without loss of generality we shall suppose that $t\geq 0$, so that in Theorem \ref{Glambda} the dominant term is $G_{\lambda}(R,R-t)$. Consider first the case when $\lambda\geq 1$. By applying Lemma \ref{intduf} and 
Theorem \ref{Glambda} with $\tau=0$ we deduce that 
\begin{align*}
I&=\int_S|W^t_{\lambda}(x)|\,R(x)^{\varepsilon}\dir (x)\\
&\lesssim \sum_{k=1}^{n/2}\,\int_{R(x)<1}  \du(x)\,\nep^{-QR(x)/2}\,(1+|\lambda\,R(x)-t| )^{-N}\,R(x)^{-n+k+1/2}\,\lambda^{k+1/2}\,R(x)^{\varepsilon}\dir (x)\\
&+\,\int_{R(x)\geq 1}  \du(x)\,\nep^{-QR(x)/2}(1+|\lambda\,R(x)-t| )^{-N}\,\lambda^{(n+1)/2}\,R(x)^{\varepsilon}\dir (x)\\
&\lesssim   \sum_{k=1}^{n/2}\int_0^1(1+|\lambda\,R-t| )^{-N}\,R^{-n+k+1/2}\,\lambda^{k+1/2}\,R^{n-1}\,R^{\varepsilon}\di R\\
&+\int_1^{\infty}(1+|\lambda\,R-t| )^{-N}\,\lambda^{(n+1)/2}\,R^{1+\varepsilon}\di R\\
&\lesssim   \sum_{k=1}^{n/2}\int_0^1(1+|\lambda\,R-t| )^{-N}\,R^{k-1/2+\varepsilon}\,\lambda^{k+1/2}\di R\\
&+\int_1^{\infty}(1+|\lambda\,R-t| )^{-N}\,\lambda^{(n+1)/2}\,R^{1+\varepsilon}\di R\,.
\end{align*}
If $N$ is sufficently large, then (i) follows by \cite[Lemma 6.4]{MT}.

Assume next that  $\lambda <1.$ We then  apply Theorem \ref{Glambda} with $\tau=1/2$ to obtain
\begin{align*}
I&=\int_S|W^t_{\lambda}(x)|\,R(x)^{\varepsilon}\dir (x)\\
&\lesssim \int_{R(x)<1}  \du(x)\,\nep^{-QR(x)/2}\,(1+|\lambda\,R(x)-t| )^{-N}\,R(x)^{-n+3/2+1/2}\,\lambda^{2}\,R(x)^{\varepsilon}\dir (x)\\
&+\,\int_{R(x)\geq 1}  \du(x)\,\nep^{-QR(x)/2}(1+|\lambda\,R(x)-t| )^{-N}\,\lambda^{2}\,R(x)^{\varepsilon}\dir (x)\\
&\lesssim \int_0^1(1+|\lambda\,R-t| )^{-N}\,R^{-n+3/2+1/2}\,\lambda^{2}\,R^{n-1}\,R^{\varepsilon}\di R\\
&+\int_1^{\infty}(1+|\lambda\,R-t| )^{-N}\,\lambda^{2}\,R^{1+\varepsilon}\di R\\
&\lesssim \int_0^{\infty}(1+|\lambda\,R-t| )^{-N}\,\lambda^{2}\,R^{1+\varepsilon}\di R\,.
\end{align*}
If $N$ is sufficiently large, then (ii) follows by \cite[Lemma 6.4]{MT}.

This concludes the proof of (i) and (ii) in the case when $m_{\zg}$ is odd.
The proof in the case when $m_{\zg}$ is even is similar and easier, so that we  omit it.

The estimates (iii) and (iv) are an easy consequence of (i) and (ii).
\end{proof}

By means of the subordination principle described e.g. in \cite{Mu} we immediately obtain the following corollary, which gives a new proof of \cite[Theorem 6.1]{HS} and \cite[Theorem 4.3, estimate (20)]{V1}.
\begin{coro}
Let $\varepsilon, s_0, s_1$ be positive constants such that $s_0>3/2+\varepsilon$ and $s_1>n/2+\varepsilon$. Then there exists a constant $C$ such that for every continuous function $F$ supported in $[1,2]$ and $0<\lambda< 1$,
$$\int_S\Big|k_{F\big(\frac{\D}{\lambda^2}\big)}(x)  \Big|\,\big(1+\lambda\,R(x) \big)^{\varepsilon}\,\dir (x)\leq C\,\|F\|_{H^{s_0}(\RR)}\,,$$
while for $\lambda\geq 1$
$$\int_S\Big|k_{F\big(\frac{\D}{\lambda^2}\big)}(x)  \Big|\,\big(1+\lambda\,R(x) \big)^{\varepsilon}\,\dir (x)\leq C\,\|F\|_{H^{s_1}(\RR)}\,.$$
\end{coro}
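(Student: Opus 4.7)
The plan is to realize $F(\D/\lambda^2)$ as a superposition of spectrally localized wave operators through Fourier inversion, and then to invoke Proposition \ref{Wtlambda} termwise. Set $G(\xi):=F(\xi^2)$ and extend $G$ to an even function on $\RR$, so that $G$ is continuous and supported in $[-\sqrt 2,-1]\cup[1,\sqrt 2]$. Pick an even bump function $\psi$ of the type admitted by Theorem \ref{Glambda} with $\psi\equiv 1$ on $\supp G$; in the regime $\lambda\geq 1$ this forces the support of $\psi$ to be $[-2,-c]\cup[c,2]$ for some $0<c<1$, rather than $[-2,-1]\cup[1,2]$, but the proofs of Theorem \ref{Glambda} and Proposition \ref{Wtlambda} carry over with the same decay rates under this slight relaxation. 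By evenness of $G$ and Fourier inversion,
$$G(\xi)=\frac{1}{\pi}\int_0^\infty\hat G(t)\,\cos(t\xi)\,\di t,$$
so that in the functional calculus of $\D$,
$$F(\D/\lambda^2)=\psi(\sqrt\D/\lambda)\,G(\sqrt\D/\lambda)=\frac{1}{\pi}\int_0^\infty\hat G(t)\,\psi(\sqrt\D/\lambda)\cos(t\sqrt\D/\lambda)\,\di t,$$
which yields the kernel identity $k_{F(\D/\lambda^2)}=\pi^{-1}\int_0^\infty\hat G(t)\,W^t_\lambda\,\di t$.

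Next, I take absolute values, integrate against $(1+\lambda R(x))^\varepsilon\,\di\rho(x)$, interchange the order of integration by Fubini, and apply parts (iii)--(iv) of Proposition \ref{Wtlambda}. This reduces the weighted $L^1$-estimate for $k_{F(\D/\lambda^2)}$ to
$$\int_0^\infty|\hat G(t)|\,(1+t)^{\alpha+\varepsilon}\,\di t,$$
where $\alpha=(n-1)/2$ for $\lambda\geq 1$ and $\alpha=1$ for $\lambda<1$. A Cauchy--Schwarz split with weight $(1+t)^s$ bounds this by
$$\bigl\|(1+|t|)^s\,\hat G\bigr\|_{L^2(\RR)}\;\Bigl(\int_0^\infty(1+t)^{2(\alpha+\varepsilon-s)}\,\di t\Bigr)^{1/2},$$
and the second factor is finite exactly when $s>\alpha+\varepsilon+1/2$, i.e.\ when $s>n/2+\varepsilon$ for $\lambda\geq 1$ and when $s>3/2+\varepsilon$ for $\lambda<1$, matching the thresholds $s_1$ and $s_0$ in the statement. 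Up to a constant, the first factor is $\|G\|_{H^s(\RR)}$.

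Finally, I need the Sobolev-norm comparison $\|G\|_{H^s(\RR)}\lesssim\|F\|_{H^s(\RR)}$. Since $\xi\mapsto\xi^2$ is a smooth diffeomorphism from a neighborhood of $[1,\sqrt 2]$ onto a neighborhood of $[1,2]$, with bounded derivatives of all orders and nonvanishing Jacobian, this follows from the standard change-of-variables theorem for fractional Sobolev spaces combined with a fixed smooth cutoff localizing to the relevant supports; the even reflection contributes only an inessential factor. The main obstacle I anticipate is the bookkeeping around the choice of $\psi$ when $\lambda\geq 1$: Theorem \ref{Glambda} is formulated for $\psi$ vanishing on $[-1,1]$, whereas $G$ need not vanish at $\pm 1$, so one has to verify that the same argument yields the analogous estimates when $\psi$ is only required to be supported in $[-2,-c]\cup[c,2]$ for some fixed $c<1$. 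All remaining steps are routine manipulations.
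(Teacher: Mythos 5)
Your argument is correct and is exactly the ``subordination principle'' the paper invokes: the paper's proof consists of a reference to \cite[Corollary 6.5]{MT}, and that argument (also carried out explicitly in the proof of the later gradient corollary, where the paper likewise takes $f(v)=F(v^2)$, writes $f(\sqrt{\D}/\lambda)=\pi^{-1}\int_0^\infty \hat f(t)\cos(t\sqrt{\D}/\lambda)\,\di t$, inserts a cutoff $\psi$ supported in $[-4,-1/2]\cup[1/2,4]$, and uses $\|f\|_{H^s}\sim\|F\|_{H^s}$) is the same Fourier-inversion-plus-Cauchy--Schwarz scheme you describe, with your exponent bookkeeping matching the thresholds $s_0>3/2+\varepsilon$ and $s_1>n/2+\varepsilon$. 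Your side remark about relaxing the support condition on $\psi$ for $\lambda\geq 1$ is a legitimate point that the paper also glosses over, and it is harmless for the stated decay rates.
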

\begin{proof}
The proof follows along the lines  of the proof of \cite[Corollary 6.5]{MT}. We omit the details.
\end{proof}

\section{Spectrally localized estimates for the gradient of the wave propagator}In this section we prove an estimate of the gradient of the wave propagator associated to the Laplacian $L$.

Let $k^t_{\lambda}$ denote the convolution kernel of $m^t_{\lambda}(\D)=\psi\Big(\frac{\sqrt {\D}}{\lambda} \Big)\,\cos (t\,\sqrt{\D})$ and $X_0,\ldots,X_{n-1}$ be the left invariant vector fields which we introduced in Section \ref{Laplacian}. We have that
\begin{align*}
X_0k^t_{\lambda}(x)&=\big(X_0\du\big)(x)\,\int_{\RR}\psi \Big(\frac{s}{\lambda} \Big)\,\cos (t\,s)\,F_{R(x)}(s)\di s\\
&+\du(x)\,\int_{\RR}\psi \Big(\frac{s}{\lambda} \Big)\,\cos (t\,s)\,\partial_RF_{R(x)}(s)\,(X_0R)(x)\di s
\\
=&\du(x)\,\int_{\RR}\psi \Big(\frac{s}{\lambda} \Big)\,\cos (t\,s)\,\big[-Q/2\,F_{R(x)}(s)+    \partial_RF_{R(x)}(s)\,(X_0R)(x)\big]\di s\,,
\end{align*}
and
$$X_ik^t_{\lambda}(x)=\du(x)\,\int_{\RR}\psi \Big(\frac{s}{\lambda} \Big)\,\cos (t\,s)\,\partial_RF_{R(x)}(s)\,(X_iR)(x)\di s\qquad \forall i=1,\ldots,n-1\,.
$$
We recall that $\big|X_iR(x)\big|\leq 1$, for all $x\in S$ and $i=0,\ldots,n-1$ \cite{H}.

\begin{teo}\label{Hlambda}
For all $i=0,..,n-1$ we have that
$$X_ik^t_{\lambda}(x)= \du(x)\,\nep^{-QR(x)/2}\,\big[H_{\lambda}\big(R(x),R(x)-t\big)+ H_{\lambda}\big(R(x),R(x)+t\big) \big]\,,$$
where the function $H_{\lambda}$ satisfies for every $N$ in $\NN$ the following estimates:
\begin{itemize}
\item[(i)] if $m_{\zg}$ is even, then
$$\big|H_{\lambda}(R,u)\big|\leq  \begin{cases}
C_N\,(1+|\lambda\,u| )^{-N}\,\sum_{k=1}^{(n+1)/2}\lambda^{k+1}&{\rm{if}}~R\geq 1\\
C_N\,(1+|\lambda\,u| )^{-N}\,\sum_{k=1}^{(n+1)/2}R^{-n+k}\,\lambda^{k+1}&{\rm{if}}~R< 1\,;
\end{cases}
$$
\item[(ii)] if $m_{\zg}$ is odd, then for every $\tau$ in $[0,1/2]$
$$\big|H_{\lambda}(R,u)\big|\leq  \begin{cases}
C_N\,(1+|\lambda\,u| )^{-N}\,\lambda^{(n+3)/2}&{\rm{if}}~R\geq 1~{\rm{and}}~\lambda\geq 1\\
C_N\,(1+|\lambda\,u| )^{-N}\,\lambda^{2}&{\rm{if}}~R\geq 1~{\rm{and}}~\lambda< 1\\
C_N\,(1+|\lambda\,u| )^{-N}\,\sum_{k=1}^{n/2+1}R^{-n+k-1/2+\tau}\,\lambda^{k+1/2+\tau}&{\rm{if}}~R< 1~{\rm{and}}~\lambda\geq 1\\
C_N\,(1+|\lambda\,u| )^{-N}\,R^{-n+1/2+\tau}\,\lambda^{2}&{\rm{if}}~R< 1~{\rm{and}}~\lambda< 1\,,
\end{cases}
$$
\end{itemize}
where the constants $C_N$ depend only on $\tau$ and on the $C^N$-norms of $\psi$.
\end{teo}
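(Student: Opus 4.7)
The plan is to follow precisely the blueprint of the proof of Theorem~\ref{Glambda}. I start from the explicit formulas for $X_ik^t_\lambda$ displayed just before the statement: for $i\neq 0$ one has $X_ik^t_\lambda(x)=\du(x)\int_\RR \psi(s/\lambda)\cos(ts)\,\partial_RF_{R(x)}(s)\,(X_iR)(x)\,ds$, and for $i=0$ the same integrand appears, together with an extra summand proportional to $\du(x)\int_\RR \psi(s/\lambda)\cos(ts)\,F_{R(x)}(s)\,ds$. Since $|X_iR(x)|\le 1$ for every $i$, the factor $(X_iR)(x)$ can be treated as a bounded multiplier, and everything is reduced to scalar oscillatory integrals.

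I then substitute the asymptotic expansions for $\partial_RF_R$ from Proposition~\ref{F_R} and use $\cos(ts)=\tfrac12(e^{its}+e^{-its})$. The oscillating factor $e^{iRs}$ present in every summand combines with $e^{\pm its}$ to produce two groups of terms with phases $e^{i(R\pm t)s}$, yielding the desired decomposition $X_ik^t_\lambda=\du\,e^{-QR/2}\bigl[H_\lambda(R,R-t)+H_\lambda(R,R+t)\bigr]$. Here $H_\lambda(R,u)$ is a finite sum (over $k$ up to $(n+1)/2$ in the even case and up to $n/2+1$ in the odd case) of scalar integrals of the form $R^{\alpha(k,\tau)}\int_\RR \psi(s/\lambda)\,b_k^{\beta}(s)\,s^k\,e^{ius}\,ds$, with $b_k^{\beta}\in\mathcal{S}^{\beta}$ uniformly in $R$.

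The estimates (i) and (ii) then follow by invoking the non-stationary phase bound \cite[Lemma~6.2]{MT} on each scalar integral: it produces a factor $(1+|\lambda u|)^{-N}\lambda^{k+1+\beta}$ when $\lambda\ge 1$ and $(1+|\lambda u|)^{-N}\lambda^{k+1}$ when $\lambda<1$. Collecting the contributions, retaining the dominant power in $k$, and inserting the $R$-powers $R^{-n+k}$ (even case, $R<1$) or $R^{-n+k+\tau-1/2}$ (odd case, $R<1$) reproduces exactly the four regimes of (ii) and the two regimes of (i). The extra $F_R$-contribution arising in the $i=0$ case is dominated by the bound proved in Theorem~\ref{Glambda}, which is smaller by one power of $\lambda$ than the estimates claimed here, and is therefore absorbed.

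The main obstacle lies in the odd case with $R<1$. There one must keep the free parameter $\tau\in[0,1/2]$ from Proposition~\ref{F_R} alive throughout, mimicking the balancing already used in Theorem~\ref{Glambda}: the choice $\tau=0$ minimizes the $\lambda$-exponent $k+\tau+1/2$ when $\lambda\ge 1$, while $\tau=1/2$ is essential when $\lambda<1$ to ensure the $R$-singularity $R^{-n+k-1/2+\tau}$ remains integrable against the radial density $R^{n-1}$ for the critical summand $k=1$. This bookkeeping produces the collapsed estimate $\lambda^2\,R^{-n+1/2+\tau}$ in the last case. The even case involves no $\tau$ and is a straightforward adaptation of the same argument, which is why it can be left to the reader.
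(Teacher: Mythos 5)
Your proposal is correct and follows essentially the same route as the paper: substitute the expansions of $F_R$ and $\partial_RF_R$ from Proposition \ref{F_R} into the explicit formulas for $X_ik^t_\lambda$, use $|X_iR|\le 1$, and apply the non-stationary phase bound of \cite[Lemma 6.2]{MT} to each scalar integral, with the extra $F_R$-term for $i=0$ absorbed into the dominant $\partial_RF_R$-contribution. The only cosmetic difference is that the paper keeps $\tau$ free throughout (the optimizing choices $\tau=0$ and $\tau=1/2$ are only made later, in Proposition \ref{gradient}), but this does not affect the argument.
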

\begin{proof}
As before, we shall only consider the case when  $m_{\zg}$ is odd;   the case when $m_{\zg}$ is even is similar and easier. 
Moreover, we shall only discuss the case where $i=0$. When $i=1,\ldots,n-1$ the proof is again  similar and easier, which is why  we omit it. 

If $R\geq 1,$ then  by Proposition \ref{F_R} we deduce that
\begin{align*}
X_0k^t_{\lambda}(x)&= \du(x)\,\nep^{-QR(x)/2}\,\sum_{k=1}^{n/2+1}\int_{\RR}\psi\Big(\frac{s}{\lambda}\Big)\,\Big[-\frac{Q}{2}\,b^{-1/2}_k(s)+p_k^{-1/2}(s)\,(X_0R)(x)\Big]\times\\
&\phantom{\du(x)\,\nep^{-QR(x)/2}\,\sum_{k=1}^{n/2+1}\int_{\RR}} \times \,s^k\,\frac{\nep^{i\big(R(x)-t\big)s}+\nep^{i\big(R(x)+t\big)s}}{2i}  \di s\\
&=\du(x)\,\nep^{-QR(x)/2}\,\big[H_\lambda\big(R(x),\,R(x)-t\big)+H_\lambda\big(R(x),\,R(x)+t\big)\big]\,,
\end{align*}
where $H_\lambda(R,u)=\sum_{k=1}^{n/2+1}\int_{\RR}\psi\Big(\frac{s}{\lambda}\Big)\,  \Big[-\frac{Q}{2}\,b^{-1/2}_k(s)+p_k^{-1/2}(s)\,(X_0R)\big]\,s^k\,\frac{\nep^{i\,u\,s}}{2i}  \di s\,.$ By \cite[Lemma 6.2]{MT} it follows that
\begin{align*}
|H_\lambda(R,\,u)|&\leq \begin{cases}
C_N\,\big(1+|\lambda\,u|\big)^{-N}\,\sum_{k=1}^{n/2+1}\lambda^{k+1/2}&{\rm{if}}~ \lambda \geq 1\\
C_N\,\big(1+|\lambda\,u|\big)^{-N}\,\sum_{k=1}^{n/2+1}\lambda^{k+1}&{\rm{if}}~ \lambda < 1,
\end{cases}
\end{align*}
which gives the required estimates in (ii) when $R\ge 1.$ 
 If $R<1$, and if $\tau\in [0,1/2],$ then
\begin{align*}
X_0k^t_{\lambda}(x)&= \du(x)\,\nep^{-QR(x)/2}\,\sum_{k=1}^{n/2+1}\int_{\RR}\psi\Big(\frac{s}{\lambda}\Big)\,\Big[-\frac{Q}{2}\,R(x)^{-n+k+\tau+1/2}\,b^{\tau-1/2}_k(s)\\
&\phantom{\du(x)\,\nep,}+ R(x)^{-n+k+\tau-1/2}\,p^{\tau-1/2}_k(s)(X_0R)(x)\Big]  \,s^k\,\frac{\nep^{i\big(R(x)-t\big)s}+\nep^{i\big(R(x)+t\big)s}}{2i}  \di s\\
&=\du(x)\,\nep^{-QR(x)/2}\,\big[H_\lambda\big(R(x),\,R(x)-t\big)+H_\lambda\big(R(x),\,R(x)+t\big)\big]\,,
\end{align*}
where 
$$H_\lambda(R,u)=\sum_{k=1}^{n/2+1} \int_{\RR}\psi\Big(\frac{s}{\lambda}\Big)\,R^{-n+k+\tau-1/2}    \big[-Q/2\,R\,b^{\tau-1/2}_k(s)+ p^{\tau-1/2}_k(s)(X_0R)\big]                   \,s^k\,\frac{\nep^{i\,u\,s}}{2i}  \di s\,.$$ By \cite[Lemma 6.2]{MT} it follows that
\begin{align*}
|H_\lambda(R,\,u)|&\leq \begin{cases}
C_N\,\big(1+|\lambda\,u|\big)^{-N}\,\sum_{k=1}^{n/2+1}R^{-n+k+\tau-1/2}\,\lambda^{k+\tau+1/2}&{\rm{if}}~ \lambda \geq 1\\
C_N\,\big(1+|\lambda\,u|\big)^{-N}\,\sum_{k=1}^{n/2+1}R^{-n+k+\tau-1/2}\,\lambda^{k+1}&{\rm{if}}~ \lambda < 1,
\end{cases}
\end{align*}
which yields the required estimates in (ii) also  when $R<1.$

\end{proof}

As a consequence of Theorem \ref{Hlambda} we obtain estimates of the $L^1$-norms of the gradient of the convolution kernel of $\psi\Big(\frac{\sqrt{\D}}{\lambda}\Big)\,\cos\Big(t\frac{\sqrt{\D}}{\lambda}\Big)$. To do it we need the following technical lemma.
\begin{lem}\label{integral}
The following estimates hold:
\begin{itemize}
\item[(i)] for all $x=(v,z,a)$ in $S$ 
$$\Big|X_0\Big(\du\,\big(\cosh(R/2)  \big)^{-Q}  \Big) \big(v,z,a\big)| \leq \frac{a\,(a+1+|v|^2/4)^{-Q-1}}{\big[1+ (a+1+|v|^2/4)^{-2} \,|z|^2\big]^{Q/2+1}}\,;$$ 
\item[(ii)] $ \int_N \frac{(a+1+|v|^2/4)^{-Q-1}}{\big[1+ (a+1+|v|^2/4)^{-2} \,|z|^2\big]^{Q/2+1}}
\di v \di z= C\,(a+1)^{-1}  \qquad \forall a\in\RR^+ \,.$
\end{itemize}
\end{lem}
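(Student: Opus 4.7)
The plan is to prove (i) by direct computation of $\du \, (\cosh(R/2))^{-Q}$ using formula (\ref{distanza}), and (ii) by two successive changes of variables exploiting the homogeneity of the integrand in $z$ and then in $v$, using the identity $Q = m_{\vg}/2 + m_{\zg}$.

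For part (i), I first observe from (\ref{distanza}) that
\begin{align*}
\cosh^2\bigl(R(v,z,a)/2\bigr) = \frac{1}{4a}\Bigl[\bigl(a+1+|v|^2/4\bigr)^2 + |z|^2\Bigr].
\end{align*}
Since $\delta(v,z,a)=a^{-Q}$, this yields the crucial simplification
$$
\du(v,z,a)\,\bigl(\cosh(R/2)\bigr)^{-Q} = 2^{Q}\Bigl[\bigl(a+1+|v|^2/4\bigr)^2+|z|^2\Bigr]^{-Q/2},
$$
in which the $a$-dependence from $\du$ has been absorbed. Writing $A(v,a):=a+1+|v|^2/4$, note $\partial_a A=1$, so applying $X_0=a\partial_a$ gives
$$
X_0\Bigl(\du\,(\cosh(R/2))^{-Q}\Bigr) = -Q\,2^{Q}\,\frac{a\,A}{(A^2+|z|^2)^{Q/2+1}},
$$
and a straightforward rewriting $(A^2+|z|^2)^{Q/2+1}=A^{Q+2}(1+A^{-2}|z|^2)^{Q/2+1}$ puts this modulus in the form claimed on the right-hand side (up to the constant $Q\,2^{Q}$, which will later be absorbed into the implicit constant of (ii)).

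For part (ii), I would integrate first in $z$, using the scaling $z=A\,w$ with $dz=A^{m_{\zg}}dw$:
$$
\int_{\zg}\frac{dz}{(1+A^{-2}|z|^2)^{Q/2+1}} = A^{m_{\zg}}\int_{\zg}\frac{dw}{(1+|w|^2)^{Q/2+1}} = C\,A^{m_{\zg}},
$$
the last integral being finite because $2(Q/2+1)-m_{\zg}=m_{\vg}/2+2>0$. Substituting back reduces the problem to
$$
C\int_{\vg}\bigl(a+1+|v|^2/4\bigr)^{-Q-1+m_{\zg}}\,dv.
$$
Then I would switch to polar coordinates and set $u = |v|^2/\bigl(4(a+1)\bigr)$, which gives a factor of $(a+1)^{m_{\vg}/2}$ from the Jacobian and a factor $(a+1)^{m_{\zg}-Q-1}$ from the integrand. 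Since $Q=m_{\vg}/2+m_{\zg}$, these combine to $(a+1)^{-1}$, leaving a convergent Beta-type integral whose tail behaves like $u^{-2}$ at infinity; this yields a finite absolute constant and completes the proof.

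The only real subtlety lies in the bookkeeping of exponents in part (ii): one must check that the exponent of $(1+u)$ in the resulting Beta integral, namely $m_{\zg}-Q-1+(m_{\vg}-2)/2=-2$, is strictly less than $-1$ so the integral converges at infinity, and one must recognize the cancellation producing exactly $(a+1)^{-1}$. Once these are in place, both assertions follow by direct calculation, with no auxiliary results beyond formula (\ref{distanza}) and the expression $X_0=a\partial_a$.
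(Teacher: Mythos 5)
Your proof is correct and, for part (ii), follows essentially the same route as the paper: the substitution $z=(a+1+|v|^2/4)\,w$ in the $\zg$-integral, followed by the elementary estimate of the remaining $v$-integral using $-Q-1+m_{\zg}=-m_{\vg}/2-1$; for part (i) the paper merely cites \cite[Lemma 4.5]{V1}, whereas you derive it directly from (\ref{distanza}) and $X_0=a\,\partial_a$ via the identity $\du\,(\cosh(R/2))^{-Q}=2^{Q}\bigl[(a+1+|v|^2/4)^2+|z|^2\bigr]^{-Q/2}$, and this computation checks out. The only (harmless) caveat is that your version of (i) carries the extra constant $Q\,2^{Q}$ not present in the stated inequality, which is immaterial since the lemma is only ever invoked up to multiplicative constants.
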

\begin{proof}
For a proof of (i) see \cite[Lemma 4.5]{V1}. In order to prove (ii), notice that  the  change variables $(a+1+|v|^2/4)^{-1}\,z=w$ in  the integral on the left-hand side of (ii) transforms it into 
\begin{align*}
&\int_{\vg} (a+1+|v|^2/4)^{-Q-1+m_{\zg}}\di v\,\int_{\zg}\frac{\di w}{(1+|w|^2)^{Q/2+1}}\\
\lesssim& \int_{\vg} (a+1+|v|^2/4)^{-m_{\vg}/2-1}\di v \\
\lesssim&\,C\,  (a+1)^{-1}   \,,
\end{align*}
as required.
\end{proof}
We can now  prove an $L^1$-estimate for the Riemannian gradient of the convolution kernel of $\psi\Big(\frac{\sqrt{\D}}{\lambda}\Big)\,\cos\Big(t\frac{\sqrt{\D}}{\lambda}\Big)$.
\begin{prop}\label{gradient}
Let $W^t_{\lambda}=k^{t/{\lambda}}_{\lambda}$ denote the convolution kernel of $\psi\Big(\frac{\sqrt{\D}}{\lambda}\Big)\,\cos\Big(t\frac{\sqrt{\D}}{\lambda}\Big)$. 
\begin{itemize}
\item[(i)] If $\lambda \geq 1$ and $\supp \psi\subset [-2,-1]\cup [1,2]$ then 
$$\int_S\|\nabla W^t_{\lambda}(x)\|\dir (x)\leq\begin{cases}  
C\,\lambda\,(1+t)^{(n-1)/2}&{\rm{if~}}t<\lambda\\
C\,\lambda^{(n-1)/2}\,t&{\rm{if~}}t\geq \lambda\,.
\end{cases}
$$
\item[(ii)] If $\lambda <1$ and $\supp \psi\subset [-2,2]$ then
$$\int_S\|\nabla W^t_{\lambda}(x)\|\dir (x)\leq C\, \lambda\,(1+t)\qquad \forall t\in\RR\,.$$
\end{itemize}
The constants in these estimates depend only on the $C^N$-norms of $\psi$.
\end{prop}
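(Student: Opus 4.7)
The plan is to adapt the template of Proposition~\ref{Wtlambda}'s proof, using the pointwise bounds of Theorem~\ref{Hlambda} in place of those of Theorem~\ref{Glambda}. Since $\|\nabla W^t_\lambda\|\le\sum_{i=0}^{n-1}|X_iW^t_\lambda|$, it suffices to bound each $\|X_iW^t_\lambda\|_{\lu{\rho}}$. By Theorem~\ref{Hlambda} applied with time parameter $t/\lambda$, $|X_iW^t_\lambda(x)|$ is dominated pointwise by $\du(x)\,\nep^{-QR(x)/2}|H_\lambda(R(x),R(x)\mp t/\lambda)|$; assuming $t\ge 0$, the branch with argument $R+t/\lambda$ is subdominant, and Lemma~\ref{intduf} reduces the $L^1$-norm to a one-dimensional integral
$$\int_0^\infty|H_\lambda(R,R-t/\lambda)|\,\nep^{-QR/2}J(R)\di R,$$
with $\nep^{-QR/2}J(R)\lesssim R^{n-1}$ on $[0,1]$ and $\lesssim R$ on $[1,\infty)$.

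For part (i), $\lambda\ge 1$: inserting the bounds from Theorem~\ref{Hlambda} (with $\tau=0$ when $m_{\zg}$ is odd) one obtains one-dimensional integrals of the form $\int(1+|\lambda R-t|)^{-N}R^\beta\lambda^\alpha\di R$ on $[0,1]$ and $[1,\infty)$, which by the substitution $u=\lambda R-t$ and \cite[Lemma~6.4]{MT} yield the stated bounds. The extra factor $\lambda$ relative to Proposition~\ref{Wtlambda} originates from the one higher power of $s$ (hence of $\lambda$) carried by the Fourier-integral bounds on $H_\lambda$ compared to $G_\lambda$.

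For part (ii), $\lambda<1$: a direct application of the above scheme yields $\|\nabla W^t_\lambda\|_{\lu{\rho}}\lesssim 1+t$, short of the required $\lambda(1+t)$ by one power of $\lambda$. The missing $\lambda$ reflects the slow spatial variation of $W^t_\lambda$, whose spectral support is contained in $[-2\lambda,2\lambda]$. For $i\ge 1$, $X_i$ does not act on $\du$, so $X_iW^t_\lambda(x)=\du(x)(X_iR)(x)\,\Phi'(R(x))$ with $\Phi(R)=\int_\RR\psi(s/\lambda)\cos(ts/\lambda)F_R(s)\di s$; the derivative $\Phi'$ produces an extra $s$ inside the Fourier integral, and the substitution $s=\lambda\sigma$ furnishes the desired $\lambda$. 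The delicate case is $i=0$: writing $W^t_\lambda(x)=\du(x)(\cosh(R(x)/2))^{-Q}\,\Psi(R(x))$ with $\Psi(R)=(\cosh(R/2))^Q\Phi(R)$, one splits
$$X_0W^t_\lambda=X_0\bigl(\du(\cosh(R/2))^{-Q}\bigr)\,\Psi(R)+\du(\cosh(R/2))^{-Q}(X_0R)\,\Psi'(R).$$
The second summand is controlled as in the $i\ge 1$ case. The first summand is handled via Lemma~\ref{integral}: part (ii) of that lemma yields $\int_N|X_0(\du(\cosh(R/2))^{-Q})|\di v\di z\le Ca(a+1)^{-1}$, and combining this with $|\Psi(R)|\lesssim\lambda^2(1+|\lambda R-t|)^{-N}$ (a consequence of Theorem~\ref{Glambda} together with $(\cosh(R/2))^Q\nep^{-QR/2}\asymp 1$ as $R\to\infty$) and the geometric lower bound $R\gtrsim|\log a|$ coming from \eqref{distanza}, the $a$-integration, via the change of variables $u=\lambda\log a-t$, delivers the missing factor $\lambda$ and produces $\lambda(1+t)$.

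The main obstacle is case (ii), specifically the extraction of the factor $\lambda$ in the $X_0$-contribution arising from $X_0\du=-(Q/2)\du$; Lemma~\ref{integral} was introduced precisely to convert the derivative of $\du(\cosh(R/2))^{-Q}$ into an integrable weight whose $a$-integral, paired with the slow variation of the profile $\Psi$, produces the desired $\lambda$.
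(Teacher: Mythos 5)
Your overall scheme matches the paper's for part (i) ($\lambda\ge 1$) and for the "small ball" contribution in part (ii), and your decomposition of $X_0W^t_\lambda$ via the factor $(\cosh(R/2))^{-Q}$ together with Lemma~\ref{integral} is indeed the paper's device for the troublesome term. However, there is a genuine gap in your remedy for the missing power of $\lambda$ in the case $\lambda<1$, $R\ge 1$.

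The gap is in the claim that, for $i\ge 1$, "$X_iW^t_\lambda=\du\,(X_iR)\,\Phi'(R)$ and the derivative $\Phi'$ produces an extra factor $s$, hence $\lambda$." This is false. From Proposition~\ref{F_R}, $F_R(s)=\nep^{-QR/2}\nep^{iRs}\sum_{k}s^k b_k(s)$, so $\partial_RF_R$ contains the term $(-Q/2)\,\nep^{-QR/2}\nep^{iRs}\sum_k s^k b_k(s)$ coming from differentiating $\nep^{-QR/2}$, which carries \emph{no} extra factor of $s$. Consequently the sum in $\partial_RF_R$ still starts at $k=1$, and since for $\lambda<1$ the dominant power is $\lambda^{k+1}$ with $k=1$, Theorem~\ref{Hlambda}(ii) yields exactly the same $\lambda^2$ bound as Theorem~\ref{Glambda}(ii) for $R\ge 1$, $\lambda<1$ — not $\lambda^3$. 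The extra-$s$-hence-extra-$\lambda$ heuristic does work in case (i), because there (with $\lambda\ge 1$) the top term $k=n/2+1$ is dominant and the sum does end one higher; but for $\lambda<1$ the bottom term dominates and differentiation gains nothing. The direct approach via Theorem~\ref{Hlambda} on $R\ge 1$ therefore gives only $1+t$, as you correctly observed, for all $i$, including $i\ge 1$.

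The paper's way around this (and it must be used for every $i$, not only $i=0$) is precisely to write $W^t_\lambda=\du\,(\cosh(R/2))^{-Q}\,g(R)\,\tilde\Phi(R)$ with $g(R)=(\cosh(R/2)/\nep^{R/2})^{Q}$, and to apply the product rule for $X_i$ to this factorization. The point is that $g'(R)=O(\nep^{-R})$ for $R\ge 1$ (since $\tanh(R/2)-1=O(\nep^{-R})$), so the term $\du(\cosh(R/2))^{-Q}(X_iR)\,g'(R)\tilde\Phi(R)$ gains a factor $R^{-1}$; and $\tilde\Phi'$ (the $R$-derivative of the Fourier integral with $\nep^{-QR/2}$ \emph{removed}) does carry the genuine extra $s$, so $\du(\cosh(R/2))^{-Q}(X_iR)\,g(R)\tilde\Phi'(R)$ gains a $\lambda$. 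What remains is $X_i[\du(\cosh(R/2))^{-Q}]\,g(R)\tilde\Phi(R)$, which for $i=0$ is controlled via Lemma~\ref{integral}; for $i\ge 1$ one needs (and the paper leaves implicit) an analogous pointwise bound for $X_i[\du(\cosh(R/2))^{-Q}]$, which again follows from the explicit formula \eqref{distanza}. So your step "the second summand is controlled as in the $i\ge 1$ case" is misleading: the second summand is fine, but not because $\Phi'$ carries an extra $s$ (it does not); it is fine because $\Psi'=g'\tilde\Phi+g\tilde\Phi'$ does not contain the bad $-\frac{Q}{2}\nep^{-QR/2}\tilde\Phi$ piece that shows up in $\Phi'$ — that piece has been absorbed into the $(\cosh(R/2))^{-Q}$ prefactor, which is where Lemma~\ref{integral} takes over. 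You should also separate the dyadic pieces $L_k$ as the paper does: for $k\ge 2$ the direct estimate already gives $\lambda^{3}$ and is harmless; only $k=1$ needs the $(\cosh(R/2))^{-Q}$ mechanism.
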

\begin{proof}
Again, we shall only deal with the case when  $m_{\zg}$ is odd, since the other case is similar and easier. Without loss of generality we may and shall assume that $t\geq 0$ so that the dominant term in Theorem \ref{Hlambda} is $H_{\lambda}(R,R-t)$. We first observe that
\begin{align*}
I&=\int_S\|\nabla W^t_{\lambda}(x)\|\dir (x)\\
&=\int_{R(x)<1}\|\nabla W^t_{\lambda}(x)\|\dir (x)+\int_{R(x)\geq 1}\|\nabla W^t_{\lambda}(x)\|\dir (x)\\
&=I_0+I_{\infty}\,,
\end{align*}
and we estimate these integrals separately. 

{\emph{1. Case: $\lambda\geq 1$.}} By applying Lemma \ref{intduf} and Theorem \ref{Hlambda} with $\tau=0$ we obtain that
\begin{align*}
I_0&\lesssim \int_{R(x)<1}  \du(x)\,\nep^{-QR(x)/2}\,\big|H_{\lambda}\big(R(x),R(x)-t/{\lambda}\big)\big|\dir (x)\\
&\lesssim \int_0^1R^{n-1}\big|H_{\lambda}\big(R,R-t/{\lambda}\big)\big|\di R\\
&\lesssim  \sum_{k=1}^{n/2+1}\int_0^1R^{n-1}\,(1+|\lambda\,R-t| )^{-N}\,R^{-n+k-1/2}\,\lambda^{k+1/2}\di R\,,
\end{align*}
which by \cite[Lemma 6.4]{MT} satisfies (i).

We proceed in the same way for $I_{\infty}$.   
By Theorem \ref{Hlambda} and Lemma \ref{intduf} 
\begin{align*}
I_{\infty}&\lesssim \int_{R(x)\geq 1}  \du(x)\,\nep^{-QR(x)/2}\,\big|H_{\lambda}\big(R(x),R(x)-t/{\lambda}\big)\big|\dir (x)\\
&\lesssim \int_1^{\infty}R\,\big|H_{\lambda}\big(R,R-t/{\lambda}\big)\big|\di R\\
&\lesssim \int_1^{\infty}R\,(1+|\lambda\,R-t| )^{-N}\,\lambda^{(n+3)/2}\di R\,.
\end{align*}
By \cite[Lemma 6.4]{MT} the estimate (i) follows.


{\emph{2. Case: $\lambda< 1$.}} In this case the estimate of $I_0$ is easy. Indeed, by applying Lemma \ref{intduf} and Theorem \ref{Hlambda} with $\tau=1/2$ we deduce that 
\begin{align*}
I_0&\lesssim \int_0^1R^{n-1}\,R^{-n+1/2+1/2}\,\lambda^{2}\,\big(1+|\lambda\,R-t|\big)^{-N}\di R\\
&\lesssim \int_0^1\lambda^{2}\,\big(1+|\lambda\,R-t|\big)^{-N}\di R\,,
\end{align*}
which is clearly bounded above by $\lambda\,(1+t).$  

The estimation of the integral $I_{\infty}$ is more delicate. To obtain it we need a more precise study of the behaviour of $\nabla W^t_{\lambda}$ when $R\geq 1$ and $\lambda<1$. We recall that by (\ref{kpsi}) and Proposition \ref{F_R}
\begin{align*}
W^t_{\lambda}(x)&=\du(x)\,\nep^{-QR(x)/2}\,\sum_{k=1}^{n/2}\int_{\RR}\psi\Big(\frac{s}{\lambda}  \Big)\,\cos\Big(\frac{t\,s}{\lambda}\Big)\,s^k\,b_k^{-1/2}(s)\,\nep^{i\,R(x)\,s}\di s\\
&=\sum_{k=1}^{n/2}L_k(x)\,,
\end{align*}
where $L_k(x)=\du(x)\,\nep^{-QR(x)/2}\,\int_{\RR}\psi\Big(\frac{s}{\lambda}  \Big)\,\cos\Big(\frac{t\,s}{\lambda}\Big)\,s^k\,b_k^{-1/2}(s)\,\nep^{i\,R(x)\,s}\di s\,.$

Since $I_{\infty}\lesssim \sum_{k=1}^{n/2}\int_{R(x)\geq 1}\|\nabla L_k(x)\|\dir(x)$ we estimate each summand separately. More precisely we distinguish the cases when $k\geq 2$ and $k=1$ (which corresponds to the worst summand).

Let us suppose that $k\geq 2$ and  consider the derivative of $L_k$ along the vector field $X_0$:
\begin{align*}
X_0L_k(x)&=\big(X_0\du\big)(x)\,\nep^{-QR(x)/2}\,\int_{\RR}\psi\Big(\frac{s}{\lambda}  \Big)\,\cos\Big(\frac{t\,s}{\lambda}\Big)\,s^k\,b_k^{-1/2}(s)\,\nep^{i\,R(x)\,s}\di s\\
&-Q/2\,\big(X_0R\big)(x)\,\du(x)\,\nep^{-QR(x)/2}\times\\
&\times \,\int_{\RR}\psi\Big(\frac{s}{\lambda}  \Big)\,\cos\Big(\frac{t\,s}{\lambda}\Big)\,s^k\,b_k^{-1/2}(s)\,\nep^{i\,R(x)\,s}\di s\\
&+\big(X_0R\big)(x)\,\du(x)\nep^{-QR(x)/2}\,\int_{\RR}\psi\Big(\frac{s}{\lambda}  \Big)\,\cos\Big(\frac{t\,s}{\lambda}\Big)\,is^{k+1}\,b_k^{-1/2}(s)\,\nep^{i\,R(x)\,s}\di s\,.
\end{align*}
Thus

\begin{align}\label{Lkodd}
\big|X_0L_k(x)\big|&\lesssim  \du(x)\,\nep^{-QR(x)/2}\Big[\big|L_{\lambda}^k\big(R(x),R(x)-t/{\lambda}  \big)\big|+ \big|L_{\lambda}^k\big(R(x),R(x)+t/{\lambda}  \big)\big| \Big]\nonumber\\
&+\du(x)\,\nep^{-QR(x)/2}\Big[\big|L_{\lambda}^{k+1}\big(R(x),R(x)-t/{\lambda}  \big)\big|+ \big|L_{\lambda}^{k+1}\big(R(x),R(x)+t/{\lambda}  \big)\big| \Big]\,,
\end{align}
where $L_{\lambda}^k(R,u)=\int_{\RR}\psi\Big(\frac{s}{\lambda}\Big)\,s^k\,b_k^{-1/2}(s)~\frac{\nep^{i\,u\,s}}{2i}   \di s$. 
Since $k\geq 2$ by \cite[Lemma 6.2]{MT} it follows that for all $N$ in $\NN$
\begin{equation}\label{Lk'odd}
\big|L_{\lambda}^k(R,u) \big|+\big|L_{\lambda}^{k+1}(R,u) \big|\leq C_N\,(1+|\lambda\,u| )^{-N}\,(\lambda^{k+1}+\lambda^{k+2})\,\leq C_N\,(1+|\lambda\,u| )^{-N}\,\lambda^{3}\,.
\end{equation}
Thus by (\ref{Lkodd}) and (\ref{Lk'odd})
\begin{align}\label{1}
\int_{R(x)\geq 1}\big|X_0L_k(x)\big|\dir (x)&\lesssim\int_1^{\infty} (1+|\lambda R-t| )^{-N}\,\lambda^{3}\,R\di R\nonumber\\
&\leq \lambda\,(1+t)\,.
\end{align}
For $i=1,...n-1$ the conclusion follows in the same way so that
$$
\sum_{k=2}^{n/2}\int_{R(x)\geq 1}\|\nabla L_k(x)\|\dir(x)\lesssim \lambda\,(1+t)\,. 
$$
It remains to consider the case when $k=1$. We first write $L_1$ in the following form:
\begin{align*}
L_1(x)&=\du(x)\big(\cosh(R(x)/2)  \big)^{-Q}\,\Big(\frac{\cosh(R(x)/2)}{\nep^{R(x)/2}}\Big)^{Q}\,\int_{\RR}\psi\Big(\frac{s}{\lambda}  \Big)\,\cos\Big(\frac{t\,s}{\lambda}\Big)\,s\,b_1^{-1/2}(s)\,\nep^{i\,R(x)\,s}\di s\\
&=\du(x)\big(\cosh(R(x)/2)  \big)^{-Q}\,g\big(R(x)\big)\,h\big(R(x)\big)\,,
\end{align*}
where the function $g$ is in $\mathcal S^0$ for   $R\geq 1,$ and where 
$$
h(R)=\int_{\RR}\psi\Big(\frac{s}{\lambda}  \Big)\,\cos\Big(\frac{t\,s}{\lambda}\Big)\,s\,b_1^{-1/2}(s)\,\nep^{i\,R\,s}\di s\,.
$$ 
By deriving along the vector field $X_0$ we obtain that
\begin{align*}
X_0L_1(x)&=X_0\Big(\du(x)\big(\cosh(R(x)/2)  \big)^{-Q}  \Big)\,g\big(R(x)\big)\,h\big(R(x)\big)\\
&+\du(x)\big(\cosh(R(x)/2)  \big)^{-Q}\,\big[\partial_Rg\big(R(x)\big)\,h\big(R(x)\big)\,+g\big(R(x)\big)\,\partial_Rh\big(R(x)\big)\big]\,\big(X_0R\big)(x)\\
&=A(x)+B(x)\,.
\end{align*}
An easy application of \cite[Lemma 6.2]{MT} shows that for all $N$ in $\NN$
\begin{align}\label{Bodd}
|B(x)|&\leq C_N\,\du(x)\,\nep^{-QR(x)/2}\,\big[R(x)^{-1}\,\lambda^2+\lambda^3\big ]\,(1+|\lambda \,R(x)-t| )^{-N}\,,
\end{align}
and
\begin{align}\label{Aodd}
|A(x)|&\leq C_N\,   \Big|X_0\Big(\du(x)\big(\cosh(R(x)/2)  \big)^{-Q}  \Big) \Big|  \,\lambda^2\,(1+|\lambda \,R(x)-t| )^{-N}\,.
\end{align}
It suffices to prove that the integrals of $|A|$ and $|B|$ over the complement of the unit ball satisfy the required estimate.

By (\ref{Bodd}) and \cite[Lemma 6.4]{MT} we deduce that  
\begin{align}\label{2}
\int_{R(x)\geq 1}|B(x)|\dir (x)&\lesssim   \int_1^{\infty}R\,\big[R^{-1}\,\lambda^2+\lambda^3]\,(1+|\lambda \,R-t| )^{-N}\di R\nonumber\\
&\lesssim \begin{cases}
\lambda^2\,\lambda^{-1}\,(1+\lambda)^{-N+1}+\lambda^3\,\lambda^{-2}\,(1+\lambda)^{-N+1}&{\rm{if}}~t<\lambda/2\\
\lambda^2\,\lambda^{-1}+\lambda^3\,\lambda^{-2}\,(1+t)&{\rm{if}}~t\geq \lambda/2
\end{cases}\nonumber\\
&\lesssim \lambda \,(1+t)\,,
\end{align}
as required. We next integrate $|A|$:
\begin{align*}
\int_{R(x)\geq 1}\big|A(x)\big|\dir(x)&\leq \int_{R(x)< 2t/{\lambda}}\big|A(x)\big|\dir(x)+ \int_{R(x)\geq  2t/{\lambda}}\big|A(x)\big|\dir(x)\\
&= J_1+J_2\,.
\end{align*}
We estimate $J_1$ and $J_2$ separately by using Lemma \ref{integral} above. Note that if $R(x)\geq  2t/{\lambda}$, then
\begin{align*}
(1+|\lambda \,R(x)-t| )^{-N}&\lesssim (1+\lambda \,R(x) )^{-N}
\ \lesssim\  \big(1+\lambda \,\log {\rm{max}}(a,\,1/a) \big)^{-N}\,,
\end{align*}
since by \eqref{distanza} 
$$ \log {\rm{max}}(a,\,1/a)\lesssim R(x),$$
 if $a$ denotes the $A$-component of $x\in S.$
Thus, by Lemma \ref{integral},
\begin{align*}
J_2&\lesssim \lambda^2\, \int_{\RR^+} \frac{\di a}{a} \, \Big(1+\lambda \,\log {\rm{max}}\big(a,\,1/a\big) \Big)^{-N}
\,a\,\int_N \frac{(a+1+|v|^2/4)^{-Q-1}}{\big[1+ (a+1+|v|^2/4)^{-2} \,|z|^2\big]^{Q/2+1}}
\di v \di z\\
&\lesssim \lambda^2\, \int_{\RR^+} {\di a} \, \Big(1+\lambda \,\log {\rm{max}}\big(a,\,1/a\big) \Big)^{-N}
\,  (a+1)^{-1} \\
&\lesssim\int_1^\infty (1+\lambda \log a)^{-N} a^{-1}\, da
\ \lesssim \  \lambda\,(t+1)\,.
\end{align*}
On the other hand, we note that if $R(x) <2t/{\lambda}$, then similarly
\begin{align*}
|\log a |\lesssim R(x)<2t/{\lambda}\,,
\end{align*}
so that by Lemma \ref{integral}
\begin{align*}
J_1&\lesssim  \lambda^2\, \int_{|\log a |\lesssim 2t/{\lambda}}\frac{\di a}{a}\,a\,\int_N \frac{(a+1+|v|^2/4)^{-Q-1}}{\big[1+ (a+1+|v|^2/4)^{-2} \,|z|^2\big]^{Q/2+1}}\di v \di z\\
&\lesssim  \lambda^2\, \int_{|\log a |\lesssim 2t/{\lambda}}a^{-1}\di a
\ \lesssim \  \lambda^2\,2t/{\lambda}
\ \lesssim \ \lambda\,(1+t)\,.
\end{align*}
It follows that 
\begin{equation}\label{3}
\int_{R(x)\geq 1}|A(x)|\dir(x)\lesssim \lambda\,(1+t)\,.
\end{equation} 
By (\ref{1}), (\ref{2}), (\ref{3}) we deduce that $I_{\infty}\lesssim   \lambda\,(1+t)\,$.

This concludes the proof also in the case when $\lambda<1$.

\end{proof}

By means of the subordination principle described e.g. in \cite{Mu} we obtain the following corollary which gives a new proof of \cite[Theorem 4.3]{V1}.

\begin{coro}
Let $s_0, s_1$ be positive constants such that $s_0>3/2$ and $s_1>n/2$. Then there exists a constant $C$ such that for every continuous function $F$ supported in $[1,2]$ 
$$\int_S\Big|K_{F\big(\frac{\D}{\lambda^2}\big)}(x,y) -K_{F\big(\frac{\D}{\lambda^2}\big)}(x,z) \Big|\dir (x)\leq \begin{cases}
C\,\lambda\,d(y,z)\,\|F\|_{H^{s_0}(\RR)}&{\rm{if}} ~\lambda< 1\\
C\,\lambda\,d(y,z)\,\|F\|_{H^{s_1}(\RR)}&{\rm{if}} ~\lambda\geq 1\,.
\end{cases}
$$
\end{coro}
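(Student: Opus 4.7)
The plan is to adapt \cite[Corollary 6.5]{MT} by combining subordination with the Riemannian mean value theorem on $(S,d)$ and the $L^1$--gradient estimate of Proposition \ref{gradient}. First I choose an even smooth cutoff $\tilde\psi$ equal to $1$ on $[1,\sqrt{2}]$ and supported in $[-2,-1/2]\cup[1/2,2]$ if $\lambda\geq 1$ (respectively $[-2,2]$ if $\lambda<1$). Setting $G(s):=F(s^2)\tilde\psi(s)$, the function $G$ is even and satisfies $\|G\|_{H^{\sigma}(\RR)}\lesssim\|F\|_{H^{\sigma}(\RR)}$; Fourier inversion together with $F(\D/\lambda^2)=F(\D/\lambda^2)\tilde\psi(\sqrt{\D}/\lambda)$ gives
\begin{equation*}
F(\D/\lambda^2)=(2\pi)^{-1}\int_{\RR}\hat G(t)\,\cos(t\sqrt{\D}/\lambda)\,\tilde\psi(\sqrt{\D}/\lambda)\di t,
\end{equation*}
so that
\begin{equation*}
K_{F(\D/\lambda^2)}(x,y)-K_{F(\D/\lambda^2)}(x,z)=(2\pi)^{-1}\int_{\RR}\hat G(t)\,\bigl[K_{W^t_\lambda}(x,y)-K_{W^t_\lambda}(x,z)\bigr]\di t,
\end{equation*}
where $W^t_\lambda$ denotes the convolution kernel of $\tilde\psi(\sqrt{\D}/\lambda)\cos(t\sqrt{\D}/\lambda)$.

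For each fixed $t$, applying the Riemannian mean value theorem along a unit-speed geodesic from $y$ to $z$ and using Fubini yields
\begin{equation*}
\int_S\bigl|K_{W^t_\lambda}(x,y)-K_{W^t_\lambda}(x,z)\bigr|\di\rho(x)\leq d(y,z)\,\sup_{y'\in S}\int_S\|\nabla^y K_{W^t_\lambda}(x,y')\|\di\rho(x).
\end{equation*}
From $K_{W^t_\lambda}(x,y)=\delta(y)W^t_\lambda(y^{-1}x)$, together with the elementary facts $X_0\delta=-Q\delta$, $X_i\delta=0$ for $i\geq1$, and the identity $X_i^y[f(y^{-1}x)]=-(\tilde X_if)(y^{-1}x)$, where $\tilde X_i$ denotes the right-invariant vector field that agrees with $X_i$ at the identity, the product rule gives the pointwise bound
\begin{equation*}
\|\nabla^y K_{W^t_\lambda}(x,y)\|\leq\delta(y)\bigl[\,Q\,|W^t_\lambda(y^{-1}x)|+\|\tilde\nabla W^t_\lambda(y^{-1}x)\|\,\bigr].
\end{equation*}
The change of variable $u=y^{-1}x$, under which $\int f(y^{-1}x)\di\rho(x)=\delta(y)^{-1}\int f(u)\di\rho(u)$, cancels the $\delta(y)$ factor and produces the uniform-in-$y$ bound
\begin{equation*}
\int_S\|\nabla^y K_{W^t_\lambda}(x,y')\|\di\rho(x)\leq Q\|W^t_\lambda\|_{L^1(\rho)}+\|\tilde\nabla W^t_\lambda\|_{L^1(\rho)}.
\end{equation*}
Combining Proposition \ref{Wtlambda} with an analogous right-invariant version of Proposition \ref{gradient} gives an overall bound $\lesssim\lambda(1+|t|)^{N_\lambda}$ with $N_\lambda=1$ for $\lambda<1$ and $N_\lambda=(n-1)/2$ for $\lambda\geq1$. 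A Cauchy--Schwarz estimate then yields $\int_{\RR}|\hat G(t)|(1+|t|)^{N_\lambda}\di t\lesssim\|G\|_{H^{N_\lambda+1/2+\varepsilon}}\lesssim\|F\|_{H^{s_i}}$, matching the stated Sobolev indices $s_0>3/2$ and $s_1>n/2$.

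The principal technical obstacle is the $L^1$-estimate for the \emph{right}-invariant gradient $\|\tilde\nabla W^t_\lambda\|_{L^1(\rho)}$, since Proposition \ref{gradient} is formulated only for the left-invariant Riemannian gradient. The naive bound $\|\tilde\nabla f(g)\|\leq\|\mathrm{Ad}(g^{-1})\|\,\|\nabla f(g)\|$ is insufficient, as $\|\mathrm{Ad}(g^{-1})\|$ grows exponentially on the solvable group $S$. However, in the coordinates $(v,z,a)$ one computes explicitly $\tilde X_0=X_0+\tfrac12\sum_jv_j\partial_{v_j}+\sum_jz_j\partial_{z_j}$, and similar formulas hold for $\tilde X_i$ with $i\geq1$; the extra terms are then controlled by the same mechanism as in Lemma \ref{integral}, so that the arguments of Section 5 carry over with only minor modifications. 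A parallel refinement must be carried out for the combined expression $Q\,W^t_\lambda+\tilde X_0W^t_\lambda$ in order to extract the $\lambda$-factor in the case $\lambda<1$, mirroring the corresponding computation of \cite{MT} in the $ax+b$-setting.
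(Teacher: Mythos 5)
Your overall architecture (subordination, mean value theorem, Cauchy--Schwarz, and the resulting Sobolev indices) is the right one, but the proof has a genuine gap at exactly the point you flag as ``the principal technical obstacle.'' Your route differentiates $K_{F(\D/\lambda^2)}(x,y)=\delta(y)\,k_{F(\D/\lambda^2)}(y^{-1}x)$ in the second variable, which produces the \emph{right}-invariant vector fields $\tilde X_i$ acting on $W^t_\lambda$, and you then assert that the needed $L^1$ bound for $\|\tilde\nabla W^t_\lambda\|$ ``carries over with only minor modifications'' from Section 5. That assertion is not justified and is not minor: writing $\tilde X_0=X_0+\tfrac12\sum_j v_j\partial_{v_j}+\sum_j z_j\partial_{z_j}$, the correction terms have coefficients $v_j,z_j$ which, when expressed in terms of the left-invariant frame (e.g. $z_j\partial_{z_j}=(z_j/a)X_{z_j}$), are unbounded on $S$ and grow exponentially in $R$; Lemma \ref{integral} as stated only handles the very specific quantity $X_0\bigl(\du(\cosh(R/2))^{-Q}\bigr)$ and does not address these terms. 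On top of this, your bound $Q\|W^t_\lambda\|_{L^1}+\|\tilde\nabla W^t_\lambda\|_{L^1}$ loses the factor $\lambda$ for $\lambda<1$ (Proposition \ref{Wtlambda} gives $\|W^t_\lambda\|_{L^1}\lesssim 1+t$ with no $\lambda$), so you additionally need an unproved cancellation between $Q\,W^t_\lambda$ and $\tilde X_0W^t_\lambda$. In effect the hardest analytic work of the corollary is left as a claim.

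The paper sidesteps all of this with a symmetry trick you missed: since $\D$ is self-adjoint on $L^2(\rho)$, one has $K_{F(\D/\lambda^2)}(x,y)=\overline{K_{\overline F(\D/\lambda^2)}(y,x)}$, so it suffices to estimate the increment in the \emph{first} variable. Then
$$\int_S\bigl|\delta(x)k(x^{-1}y)-\delta(x)k(x^{-1}z)\bigr|\dir(x)=\int_S\bigl|k(xz^{-1}y)-k(x)\bigr|\dir(x)\leq d(y,z)\,\|\nabla k\|_{L^1(\rho)}\,,$$
using $\dil=\delta\dir$, the substitution $x\mapsto x^{-1}$, and right-invariance of $\rho$. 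The last step involves only the \emph{left}-invariant Riemannian gradient of the convolution kernel, which is exactly what Proposition \ref{gradient} controls (including the factor $\lambda$ for $\lambda<1$, where the delicate cancellation has already been carried out once and for all via Lemma \ref{integral}). I recommend you restructure your argument around this duality step; otherwise you must actually prove the right-invariant gradient estimate, which would amount to redoing most of Section 5 in a harder setting.
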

\begin{proof}
By (\ref{integralkernel}) the integral kernel $K_{F\big(\frac{\D}{\lambda^2}\big)}$ is given by
$$K_{F\big(\frac{\D}{\lambda^2}\big)}(x,y)=\delta(y)\,k_{F\big(\frac{\D}{\lambda^2}\big)}(y^{-1}\,x)\,.$$
Since $\D$ is symmetric it suffices to prove the analogous estimates for $\int_S\Big|K_{F\big(\frac{\D}{\lambda^2}\big)}(y,x) -K_{F\big(\frac{\D}{\lambda^2}\big)}(z,x) \Big|\dir (x),$ since $K_{F\big(\frac{\D}{\lambda^2}\big)}(x,y)=\overline{K_{\overline F\big(\frac{\D}{\lambda^2}\big)}(y,x)}.$ But 
\begin{align*}
&\int_S\Big|K_{F\big(\frac{\D}{\lambda^2}\big)}(y,x) -K_{F\big(\frac{\D}{\lambda^2}\big)}(z,x) \Big|\dir (x)\\
&\phantom{}=\int_S\Big|\delta(x)\,k_{F\big(\frac{\D}{\lambda^2}\big)}(x^{-1}y) -\delta(x)\,k_{F\big(\frac{\D}{\lambda^2}\big)}(x^{-1}z) \Big|\dir (x)\\
&= \int_S\Big|k_{F\big(\frac{\D}{\lambda^2}\big)}(xy) -k_{F\big(\frac{\D}{\lambda^2}\big)}(xz) \Big|\dir (x)\\
&= \int_S\Big|k_{F\big(\frac{\D}{\lambda^2}\big)}(xz^{-1}y) -k_{F\big(\frac{\D}{\lambda^2}\big)}(x) \Big|\dir (x)\\
&\leq d(z^{-1}y,e)\,\int_S\|\nabla k_{F\big(\frac{\D}{\lambda^2}\big)}(u)\|\dir (u)\\
&= d(y,z)\,\|\nabla k_{F\big(\frac{\D}{\lambda^2}\big)}\|_{L^1(\rho)}\,.
\end{align*}
Now choose an even function $\psi$ in $C^{\infty}_0(\RR)$ such that $\supp  \psi\subset [-4,-1/2]\cup [1/2,4]$ and $\psi=1$ on $[1,2]$. Set $f(v)=F(v^2)$. Then $\|f\|_{H^s}\sim\|F\|_{H^s}$ and $ F\Big(\frac{\D}{\lambda^2}\Big)=f\Big(\frac{\sqrt{\D}}{\lambda}\Big)=\psi\Big(\frac{\sqrt{\D}}{\lambda}\Big)\,f\Big(\frac{\sqrt{\D}}{\lambda}\Big)$. Moreover, by the Fourier inverse formula and Fubini's theorem, one easily obtains that 
$$f\Big(\frac{\sqrt{\D}}{\lambda}\Big)=\frac{1}{\pi}\int_{0}^{\infty}\hat{f}(t)\,\cos\Big(t\,\frac{\sqrt{\D}}{\lambda}\Big)\di t\,,$$
since $f$ is an even function. Thus
$$F\Big(\frac{\D}{\lambda^2}\Big)=\frac{1}{\pi}\int_{0}^{\infty}\hat{f}(t)\,\psi\Big(\frac{\sqrt{\D}}{\lambda}\Big)\,\cos\Big(t\,\frac{\sqrt{\D}}{\lambda}\Big)\di t\,,$$ 
which implies
\begin{align*}
\|\nabla k_{F\big(\frac{\D}{\lambda^2}\big)}\|_{L^1(\rho)}&\lesssim \int_{0}^{\infty}|\hat{f}(t)|\,\int_S\|\nabla W^t_{\lambda}(x)\|\dir (x) \di t\,. 
\end{align*}
If $0<\lambda<1$, then by Proposition \ref{gradient}
\begin{align*}
\|\nabla k_{F\big(\frac{\D}{\lambda^2}\big)}\|_{L^1(\rho)}&\lesssim\int_{0}^{\infty}|\hat{f}(t)|\,\lambda\,(1+|t|)\di t\\
&\leq\lambda\,\Big( \int_{0}^{\infty}\big|\hat{f}(t)\,(1+|t|)^{s_0}\big|^2\di t \Big)^{1/2}\,\Big( \int_{0}^{\infty} (1+|t|)^{2-2s_0}\di t\Big)^{-1/2}\\
&\leq \lambda\, \|f\|_{H^{s_0}}
\lesssim \lambda\, \|F\|_{H^{s_0}}\,,
\end{align*}
if $s_0>3/2$. This proves the corollary when $0<\lambda<1$. The proof in the case when $\lambda\geq 1$ is similar and omitted.
\end{proof}

\begin{remark}
The estimates in Theorem \ref{Glambda} are good enough for $L^1$--estimates, but not for $L^{\infty}$--estimates, since they exhibit singularities at $R=0$. One knows that the singular support of the wave propagator for time $t$ is the sphere $R=t$, so that these singularities are in fact not present. As in \cite[Section 7]{MT} we shall improve the estimates of $k_{\lambda}^{t}$ when $R<1$. 

More precisely we can prove that for $R<1$ the function $G_{\lambda}$ which appears in Theorem \ref{Glambda} satisfies the following estimates for any $N$ in $\NN$:
\begin{align}\label{newGlambda}
\big|G_{\lambda}(R,u)\big|\leq \begin{cases}
C_N\,(1+|\lambda\,u| )^{-N}\,\lambda^{n}&{\rm{if}}~\lambda \geq 1\\
C_N\,(1+|\lambda\,u| )^{-N}\,\lambda^{2}&{\rm{if}}~\lambda< 1\,.
\end{cases}
\end{align}
The proof of (\ref{newGlambda}) follows the same outline as the proof of \cite[Theorem 7.1]{MT}.

By (\ref{newGlambda}) and Theorem \ref{Glambda} it follows easily that for $\lambda\geq 1$ and $t\geq 0$ 
$$\|k^t_{\lambda}\|_{\infty} \lesssim\, (1+t^{-(n-1)/2})\,\lambda^{(n+1)/2}\,.$$
Notice that, for small times, this estimate agrees with the one valid for the Laplacian on the Euclidean space $\RR^n$. However, for large times, there appears no dispersive effect. 
\end{remark}

\section{Growth estimates for solutions to the wave equation in terms of spectral Sobolev norms}
Given a symbol $m$ in $\mathcal S^{-\alpha}$ with $\alpha\ge 0,$  consider  the operators $T^t_1:=m(\sqrt{\D})\,\cos(t\,\sqrt{\D})$ and $T^t_2:=m(\sqrt{\D})\,\frac{\sin(t\,\sqrt{\D})}{\sqrt{\D}}$, for $t\in\RR$, which are bounded on $L^2(\rho)$ by the spectral theorem. We look for a condition on $\alpha$ such that these operators are bounded on $L^p(\rho)$, for some $1\leq p\leq \infty$. Throughout this section we often write $L^p$ instead of $L^p(\rho)$.
\begin{teo}\label{Tt1Tt2}
Let $m$ be a symbol in $\mathcal S^{-\alpha}$ and $1\leq p\leq \infty$. Set $\alpha_n(p):=(n-1)\,\big|1/p-1/2\big|$.
\begin{itemize}
\item[(i)] If $\alpha>\alpha_n(p)$, then $T^t_1$ extends from $L^p(\rho)\cap L^2(\rho)$ to a bounded operator on $L^p(\rho)$, and
$$\|T^t_1\|_{L^p\to L^p}\leq C_p\,(1+|t| )^{2|\frac 1p-\frac 12|}\,.
$$
\item[(ii)] If $\alpha>\alpha_n(p)-1$, then $T^t_2$ extends from $L^p(\rho)\cap L^2(\rho)$ to a bounded operator on $L^p(\rho)$, and
$$\|T^t_2\|_{L^p\to L^p}\leq C_p\,(1+|t| )\,.
$$
\end{itemize}
\end{teo}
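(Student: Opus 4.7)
Combine a dyadic spectral decomposition with the $L^1$-estimates of Proposition \ref{Wtlambda} to read off bounds at the endpoints $L^1$ and $L^\infty$ (via Young's inequality on $(S,\rho)$), use the trivial spectral-theorem bound at $p=2$, and interpolate via Stein's analytic interpolation theorem.

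\textbf{Dyadic decomposition and $L^1$-kernel estimates.} Fix an even bump $\psi\in C_c^\infty(\RR)$ with $\supp\psi\subset\{1/2\le|\xi|\le 2\}$ and a companion $\psi_0\in C_c^\infty([-2,2])$, and write $m=m_0+\sum_{j\ge 0}m_j$ with $m_j(\xi):=m(\xi)\psi(\xi/\lambda_j)$, $\lambda_j:=2^j$. Set $g_j(\eta):=m(\lambda_j\eta)\psi(\eta)$, so that $\|g_j\|_{C^k}\lesssim\lambda_j^{-\alpha}$ uniformly in $j$. For $T_1^t$ one has $m_j(\sqrt{\D})\cos(t\sqrt{\D})=\Psi_{j,t}(\sqrt{\D}/\lambda_j)$ with $\Psi_{j,t}(\eta):=g_j(\eta)\cos(t\lambda_j\eta)$, whose Fourier transform
$$\widehat{\Psi_{j,t}}(u)=\tfrac{1}{2}\bigl(\widehat{g_j}(u-t\lambda_j)+\widehat{g_j}(u+t\lambda_j)\bigr)$$
is concentrated at $u\asymp \pm t\lambda_j$ with amplitude $\lesssim\lambda_j^{-\alpha}$. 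Inserting a slightly wider bump $\tilde\psi\equiv 1$ on $\supp g_j$, Fourier inversion for the even function $\Psi_{j,t}$ gives
$$m_j(\sqrt{\D})\cos(t\sqrt{\D})=\tfrac{1}{\pi}\int_0^\infty \widehat{\Psi_{j,t}}(u)\,\tilde\psi(\sqrt{\D}/\lambda_j)\cos(u\sqrt{\D}/\lambda_j)\,du,$$
whose convolution kernel is an integral against $W^u_{\lambda_j}$. By Proposition \ref{Wtlambda}(i) a direct computation yields $\|K_j\|_{L^1(\rho)}\lesssim\lambda_j^{-\alpha+(n-1)/2}(1+|t|)$. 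The low-frequency piece $K_0$ is handled by the same Fourier-inversion trick on scale $\lambda\le 1$ via Proposition \ref{Wtlambda}(ii), giving $\|K_0\|_{L^1(\rho)}\lesssim 1+|t|$. Summing in $j$, $\|k_{T_1^t}\|_{L^1(\rho)}\lesssim 1+|t|$ as soon as $\alpha>(n-1)/2=\alpha_n(1)$, and Young's inequality yields $\|T_1^t\|_{L^p\to L^p}\lesssim 1+|t|$ for $p\in\{1,\infty\}$. For $T_2^t$ the same scheme applies: the extra factor $(\sqrt{\D})^{-1}$ produces a supplementary $\lambda_j^{-1}$ on the $j$-th piece, sharpening the dyadic bound to $\lambda_j^{-\alpha+(n-3)/2}(1+|t|)$ and shifting the threshold to $\alpha>(n-3)/2=\alpha_n(1)-1$; the low-frequency piece is treated by direct inspection of the wave-kernel formula \eqref{kpsi} together with Proposition \ref{F_R}, exploiting the cancellation inherent in the wave structure to obtain again $\lesssim 1+|t|$ (a naive reduction $\int_0^t m_0(\sqrt{\D})\cos(s\sqrt{\D})\,ds$ would overshoot by a factor of $|t|$).

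\textbf{$L^2$-bound and interpolation.} The spectral theorem yields $\|T_1^t\|_{L^2\to L^2}\le\|m\|_\infty\lesssim 1$ for $\alpha\ge 0$, while $\|T_2^t\|_{L^2\to L^2}\le\sup_\xi|m(\xi)\sin(t\xi)/\xi|\lesssim 1+|t|$ for $\alpha>-1$, via $|\sin(t\xi)/\xi|\le\min(|t|,\xi^{-1})$. Stein's analytic interpolation theorem, applied to the analytic family $z\mapsto m_z(\sqrt{\D})\cos(t\sqrt{\D})$ (resp.\ $m_z(\sqrt{\D})\sin(t\sqrt{\D})/\sqrt{\D}$) with $m_z(\xi):=m(\xi)(1+\xi^2)^{(\alpha-z)/2}$---for which $m_\alpha=m$ and $m_z\in\mathcal S^{-\mathrm{Re}\,z}$ with constants polynomial in $|\mathrm{Im}\,z|$---connects the endpoint bounds and yields for $p\in[1,2]$ the sharp threshold $\alpha>\alpha_n(p)$ with bound $(1+|t|)^{2|1/p-1/2|}$ for $T_1^t$, and the threshold $\alpha>\alpha_n(p)-1$ with bound $(1+|t|)^{\theta}\cdot(1+|t|)^{1-\theta}=1+|t|$ for $T_2^t$; the range $p\ge 2$ follows by duality. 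The principal technical obstacle is the sharp low-frequency analysis of $T_2^t$, where Fourier subordination through $\cos$-kernels is too crude and one must return to the exact wave-kernel formula, together with the polynomial-in-$|\mathrm{Im}\,z|$ control of constants needed to validate the Stein interpolation step.
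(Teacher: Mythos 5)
Your proposal is correct in substance: it uses the same dyadic spectral decomposition of $m$, the same $L^1$ endpoint input (Proposition \ref{Wtlambda} applied to the localized pieces), the trivial $L^2$ bound, and duality for $p>2$, and it correctly flags the two delicate points (the extra factor $\lambda^{-1}$ for $T^t_2$ and the low-frequency piece $\tilde T_0$, where the naive reduction $\int_0^t\cos(s\sqrt{\D})\,ds$ loses a factor of $t$ and one must instead use the sup bound \eqref{j0} together with the exact kernel carrying the factor $is^{-1}\bigl(\nep^{i(R-t)s}-\nep^{i(R+t)s}\bigr)$).

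Where you genuinely diverge from the paper is the interpolation step. The paper never sums the dyadic series at the endpoint: it interpolates \emph{each single piece} $T_j$ by Riesz--Thorin between $\|T_j\|_{L^1\to L^1}\lesssim 2^{-\alpha j}(1+2^jt)^{(n-1)/2}$ (resp.\ $2^{-\alpha j}2^{j(n-3)/2}\,2^jt$) and $\|T_j\|_{L^2\to L^2}\lesssim 2^{-\alpha j}$, obtaining \eqref{normTj}, and only then sums the resulting geometric series, which converges exactly when $\alpha>\alpha_n(p)$. This yields the sharp threshold for every $p$ with no analytic-family machinery. You instead sum first at $p=1$ (which requires the stronger hypothesis $\alpha>(n-1)/2$) and recover the sharp intermediate thresholds by Stein interpolation in the smoothness parameter, via the family $m_z(\xi)=m(\xi)(1+\xi^2)^{(\alpha-z)/2}$. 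This route does close: the family is uniformly $L^2$-bounded on the strip, hence admissible, and the constants in Proposition \ref{Wtlambda} depend only on $C^N$-norms of the localized symbols, which grow polynomially in $|\mathrm{Im}\,z|$; but it buys nothing here and costs precisely the admissibility verifications your write-up defers to the end. A minor further difference: you obtain the per-piece $L^1$ bounds by Fourier subordination of $\Psi_{j,t}$ through the fixed wave kernels $W^u_{\lambda_j}$, whereas the paper applies Proposition \ref{Wtlambda} directly with $\psi$ replaced by $m_j$, which is licensed because the constants there depend only on the $C^N$-norms of the bump function together with \eqref{normmj}; both give the same bound.
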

\begin{proof}
Without loss of generality we shall assume that $t>0$. We first prove (i). Let $\chi\in C^{\infty }_c(\RR)$ be an even function such that $\chi(s)=1$ if $|s|\leq 1/2$, and $\chi(s)=0$ if $|s|\geq 1$. Put $\psi_0(s):=\chi(s/2)$, and $\psi_j(s)=\chi(2^{-j-1}s)-\chi(2^{-j}s)=\psi(2^{-j}s)$, $j=1,\ldots,\infty$, where $\psi(s):=\chi(s/2)-\chi(s)$ is supported in $\{s:1/2\leq |s|\leq 2  \}$. Then $\psi_0$ is supported in $[-2,2]$, $\psi_j$ in $\{s:2^{j-1}\leq |s|\leq 2^{j+1}  \}$ for $j\geq 1$, and
\begin{equation}\label{sum}
\sum_{j=0}^{\infty}\psi_j(s)=1 \qquad \forall s\in\RR\,.
\end{equation}
We shall restrict ourselves to the case $1\leq p< 2$, since the case $p=2$ is trivial and the case $p>2$ follows from the case $p>2$ by duality. Using (\ref{sum}) we decompose the symbol $m$ as
$$m(s)=\sum_{j=0}^{\infty}m_j(2^{-j}s)\,,$$
where $m_0=m\chi$ and $m_j(s):=(m\psi_j)(2^js)=m(2^js)\,\psi(s)$, if $j\geq 1$. Notice that for all $N\in\NN$
\begin{equation}\label{normmj}
\|m_j\|_{C^N}\leq C\,2^{-\alpha j}\,,
\end{equation}
where the constant $C$ depends on the semi-norms $\|m\|_{\mathcal S^{-\alpha},k}$ only.

Then for every $f$ in $L^2(\rho)$
$$T^t_1f=\sum_{j=0}^{\infty}T_jf\qquad {\rm{in}}~L^2(\rho)\,,$$
where $T_j:=m_j\Big(\frac{\sqrt{L}}{2^j}  \Big)\,\cos\Big((2^jt)\,\frac{\sqrt{L}}{2^j}  \Big)$. Estimating the operator norms of $T_j$ on $L^1(\rho)$ by means of Proposition \ref{Wtlambda} and (\ref{normmj}) we obtain 
$$\|T_j\|_{L^1\to L^1}\lesssim\begin{cases}
2^{-\alpha j}\,(1+2^jt )^{(n-1)/2  }&{\rm{if~}}t<1\\
2^{-\alpha j}\,2^{j(n-3)/2}\,2^jt&{\rm{if~}}t\geq 1\,.
\end{cases}
$$
Interpolating the previous estimates with the trivial $L^2$ estimate $\|T_j\|_{L^2\to L^2}\lesssim2^{-\alpha j}$, we obtain the following inequalities:
\begin{equation}\label{normTj}
\|T_j\|_{L^p\to L^p}\lesssim\begin{cases}
2^{-\alpha j}\,(1+2^jt )^{(n-1)(1/p-1/2)  }&{\rm{if~}}t<1\\
2^{-\alpha j}\,2^{j(n-1)(1/p-1/2)     }\,t^{(2/p-1) }&{\rm{if~}}t\geq 1\,.
\end{cases}
\end{equation}
If $\alpha>\alpha_n(p)$, then by summation over all $j\geq 0$ (i) follows immediately.

As for (ii), observe first that if we replace $m^t_{\lambda}(s)=\psi\Big( \frac{\sqrt s}{\lambda}\Big) \,\cos(t\sqrt s)$ in Section \ref{estimatekernel} by $\tilde{m}^t_{\lambda}(s)=\psi\Big( \frac{\sqrt s}{\lambda}\Big) \,\frac{\sin(t\sqrt s)}{\sqrt s}$, then the factor $\nep^{i(R-t)s}+\nep^{i(R+t)s}$ in the corresponding kernel has to be replaced by the factor $is^{-1}\,(\nep^{i(R-t)s}-\nep^{i(R+t)s})$. By \cite[Lemma 6.2]{MT} the estimates for the function $\tilde{k}^t_{\lambda}$ and $\tilde{W}^t_{\lambda}$ are therefore the same as for ${k}^t_{\lambda}$ and ${W}^t_{\lambda}$ in Theorem \ref{Glambda} and Theorem \ref{Wtlambda}, except for an additional factor $\lambda^{-1}$. Moreover,
\begin{equation}\label{j0}
\sup_s\Big|m_j\Big( \frac{s}{2^j} \Big)\,\frac{\sin(ts)}{s}   \Big|\lesssim\begin{cases}
2^{-\alpha j}\,2^{-j}&{\rm{if~}}j\geq 1\\
t&{\rm{if~}}j=0\,.
\end{cases}
\end{equation}
Together, this implies that for $j\geq 1$, the operators $\tilde{T}_j$ which appear in the dyadic decomposition of $T^t_2$ satisfy the same estimates as $T_j$, except for an additional factor $2^{-j}$, i.e.,
\begin{align*}
\|\tilde{T}_j\|_{L^1\to L^1}&\lesssim\begin{cases}
2^{-\alpha j-j}\,(1+2^jt )^{(n-1)/2  }&{\rm{if~}}t<1\\
2^{-\alpha j}\,2^{j(n-3)/2}\,t&{\rm{if~}}t\geq 1
\end{cases}\\
&\lesssim\begin{cases}
2^{-\alpha j-j}\,2^{j(n-1)/2 }(1+t )^{(n-1)/2  }&{\rm{if~}}t<1\\
2^{-\alpha j-j}\,2^{j(n-1)/2}\,t&{\rm{if~}}t\geq 1\,.
\end{cases}
\end{align*}
By interpolating with the estimate $\|\tilde{T}_j\|_{L^2\to L^2}\lesssim2^{-\alpha j-j}$, we obtain that for $j\geq 1$
$$\|\tilde{T}_j\|_{L^p\to L^p}\lesssim\begin{cases}
2^{-\alpha j-j}\,2^{j(n-1)(1/p-1/2) }(1+t )^{(n-1)(1/p-1/2)  }&{\rm{if~}}t<1\\
2^{-\alpha j-j}\,2^{j(n-1)(1/p-1/2)}\,t^{2/p-1}&{\rm{if~}}t\geq 1\,.
\end{cases}
$$
For $j=0$ by (\ref{j0}) and Proposition \ref{Wtlambda} we obtain
$$\| \tilde{T}_0\|_{L^2\to L^2}\lesssim t\qquad \| \tilde{T}_0\|_{L^1\to L^1}\lesssim(1+t)\,,$$
hence 
$$\| \tilde{T}_0\|_{L^p\to L^p}\lesssim(1+t)^{2/p-1}\,t^{2-2/p}\lesssim\begin{cases}    
t^{2-2/p}&{\rm{if~}}t<1\\
t&{\rm{if~}}t\geq 1\,.
\end{cases}
$$
By summation over $j\geq 1$ and $j=0$ we obtain that if $\alpha>\alpha_n(p)-1$, then
\begin{align*}
\|T_2^t\|_{L^p\to L^p}&\lesssim\begin{cases}
1&{\rm{if~}}t<1\\
t^{2/p-1}+t&{\rm{if~}}t\geq 1
\end{cases}\\
&\lesssim1+|t|\,,
\end{align*}
as required.
\end{proof}
Let $u(t,x)$ be the solution of the Cauchy problem
\begin{equation}\label{Cauchy}
\partial_t^2u+Lu=0,\qquad u(0,\cdot)=f\qquad \partial_tu(0,\cdot)=g.
\end{equation}
For $f,\,g$ in $L^2(\rho)$ the solution $u$ is given by 
$u(t,\cdot)=\cos (t\sqrt{L})f+\frac{\sin(t\sqrt{L})}{\sqrt{L}}g$. We define the adapted Sobolev norms 
$$
\|\phi\|_{L^p_{\alpha}}=\| \big(1+L \big)^{\alpha/2} \,\phi\|_{L^p}\qquad \forall \alpha\in\mathbb R.
$$
From Theorem \ref{Tt1Tt2} we immediately obtain 
\begin{coro}\label{MPA}
If $1< p< \infty$, $\alpha_0>(n-1)\,\big|1/p-1/2\big|$ and $\alpha_1>(n-1)\,\big|1/p-1/2\big|-1$, then
\begin{equation}\label{Lpnorm}
\|u(t,\cdot)\|_{L^p}\leq C_p\,\big( (1+|t| )^{|\frac 2p- 1|}\, \|f\|_{L^p_{\alpha_0}} + (1+|t| )\,\|g\|_{L^p_{\alpha_1}} \big).
\end{equation}
\end{coro}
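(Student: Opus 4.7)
The plan is to reduce the corollary directly to Theorem \ref{Tt1Tt2} by absorbing the Sobolev weight $(1+L)^{\alpha/2}$ into a symbol of appropriate class. Since the solution to the Cauchy problem \eqref{Cauchy} is
$$u(t,\cdot)=\cos(t\sqrt L)\,f+\frac{\sin(t\sqrt L)}{\sqrt L}\,g,$$
it suffices by the triangle inequality to estimate the two terms separately.

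For the cosine term, I introduce the auxiliary symbol $m_{\alpha_0}(s):=(1+s^{2})^{-\alpha_0/2}$ and write
$$\cos(t\sqrt L)\,f=m_{\alpha_0}(\sqrt L)\,\cos(t\sqrt L)\,h,\qquad h:=(1+L)^{\alpha_0/2}f,$$
which is legitimate by spectral calculus on $L^{2}(\rho)$ (extended by density). The first short step is to verify that $m_{\alpha_0}\in\mathcal S^{-\alpha_0}$: by induction, each derivative $m_{\alpha_0}^{(k)}(s)$ is a linear combination of terms $s^{j}(1+s^{2})^{-\alpha_0/2-k+j/2}$ with $0\le j\le k$, so it satisfies $|m_{\alpha_0}^{(k)}(s)|\lesssim(1+s^{2})^{(-\alpha_0-k)/2}$, which is the required bound. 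Since $\alpha_0>\alpha_n(p)$, Theorem \ref{Tt1Tt2}(i) applied with this symbol gives
$$\|\cos(t\sqrt L)f\|_{L^p}=\|T_1^t h\|_{L^p}\lesssim (1+|t|)^{|2/p-1|}\|h\|_{L^p}=(1+|t|)^{|2/p-1|}\|f\|_{L^p_{\alpha_0}},$$
using that by definition $\|h\|_{L^p}=\|(1+L)^{\alpha_0/2}f\|_{L^p}=\|f\|_{L^p_{\alpha_0}}$.

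For the sine term I repeat the same procedure with $m_{\alpha_1}(s)=(1+s^{2})^{-\alpha_1/2}\in\mathcal S^{-\alpha_1}$, writing
$$\frac{\sin(t\sqrt L)}{\sqrt L}g=m_{\alpha_1}(\sqrt L)\,\frac{\sin(t\sqrt L)}{\sqrt L}\,\tilde h,\qquad \tilde h:=(1+L)^{\alpha_1/2}g.$$
Since $\alpha_1>\alpha_n(p)-1$, Theorem \ref{Tt1Tt2}(ii) yields
$$\Big\|\frac{\sin(t\sqrt L)}{\sqrt L}g\Big\|_{L^p}=\|T_2^t\tilde h\|_{L^p}\lesssim(1+|t|)\,\|\tilde h\|_{L^p}=(1+|t|)\,\|g\|_{L^p_{\alpha_1}}.$$
Adding the two bounds gives exactly \eqref{Lpnorm}.

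In this reduction there is essentially no obstacle beyond bookkeeping: the only point that needs care is the symbol-class membership of $m_{\alpha_j}$, which is elementary. The assumption $1<p<\infty$ (rather than $1\le p\le\infty$ as in Theorem \ref{Tt1Tt2}) is not used in the argument itself and is present simply because outside this range the Sobolev weight $(1+L)^{\alpha/2}$ is not as conveniently characterized; if needed, one could state the endpoint versions as well. The only slightly delicate aesthetic point is that the Sobolev exponents $\alpha_0,\alpha_1$ are strictly above the thresholds $\alpha_n(p)$ and $\alpha_n(p)-1$ respectively, exactly matching the strict inequalities in Theorem \ref{Tt1Tt2}, so nothing is lost.
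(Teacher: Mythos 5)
Your proposal is correct and is exactly the reduction the paper has in mind: the paper states the corollary as an immediate consequence of Theorem \ref{Tt1Tt2} without writing out a proof, and your factorization through the symbols $(1+s^2)^{-\alpha_j/2}$ is the intended argument. One small slip in your elementary verification: the $k$-th derivative of $(1+s^2)^{-\alpha/2}$ is a combination of terms $s^{j}(1+s^{2})^{-\alpha/2-(k+j)/2}$ (not $s^{j}(1+s^{2})^{-\alpha/2-k+j/2}$, which would not yield the bound $\lesssim(1+s^2)^{(-\alpha-k)/2}$ when $j=k$); with the corrected exponent the membership $m_{\alpha_j}\in\mathcal S^{-\alpha_j}$ follows as you claim.
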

\begin{remark}
It is likely that the estimate (\ref{Lpnorm}) even holds for 
$\alpha_0=(n-1)\,\big|1/p-1/2\big|$ and $\alpha_1=(n-1)\,\big|1/p-1/2\big|-1$. This would be the counterpart to corresponding results in the Euclidean setting  by A.~Miyachi and J.C.~Peral \cite{M, P}. To prove it, we hope to prove an   endpoint result in Theorem \ref{Tt1Tt2} for $p=1.$ More precisely, we expect that if $m$ is a symbol in $\mathcal S^{-(n-1)/2}$, then the operator $T^t_1=m(\sqrt \D)\,\cos(t\sqrt \D)$ extends to a bounded operator from $H^1$ to $L^1(\rho)$, and
$$\|T^t_1\|_{H^1\to L^1}\leq C (1+|t|)\,,
$$
where $H^1$ is the ''non-standard'' Hardy space introduced in \cite{V2}. A similar results should hold also for the operator $T^t_2$. This will be the object of further investigation.  
\end{remark}
\begin{remark}
We recall that the same problem on a noncompact symmetric space $X$ of rank one for the wave equation associated with the operator $\LQ$ has been studied by A.~Ionescu \cite{I}. He proved that if $v(t,x)$ is the solution of the Cauchy problem
\begin{equation}\label{Cauchy2}
\partial_t^2v+\LQ v=0,\qquad v(0,\cdot)=f\qquad \partial_tv(0,\cdot)=g\,,
\end{equation}
for $f,\,g$ in $L^2(\lambda)$, then if $1< p< \infty$, $\alpha_0>(n-1)\,\big|1/p-1/2\big|$ and $\alpha_1>(n-1)\,\big|1/p-1/2\big|-1$
\begin{equation}\label{Lpnormionescu}
\|v(t,\cdot)\|_{L^p(\lambda)}\leq C_p\,e^{Q|1/2-1/p|t}\,\big[ \|f\|_{L^p_{\alpha_0}(\lambda)} + (1+|t| )\,\|g\|_{L^p_{\alpha_1}(\lambda)} \big]\,,
\end{equation}
where we recall that $\lambda$ denotes the left Haar measure on $S$. 
Note that the regularity indices $\alpha_0$ and $\alpha_1$ are the same as above but there is an exponential growth with respect to the time $t$. Ionescu proved also an $L^{\infty}$--$BMO$ result. 
\end{remark}


\end{document}